\setlist[enumerate]{label=\upshape{(\roman*)}}
\def\eps{\varepsilon}
\def\cC{\mathcal {C}}
\def\cG{\mathcal {G}}
\def\cP{\mathcal {P}}
\def\cT{\mathcal {T}}
\def\1{\mathbf{1}}
\def\lam {\lambda}
\def\tce{t_c + \eps}
\def\tce2{t_c + \frac{\eps}{2}}
\DeclareMathOperator{\inj}{inj}
\DeclareMathOperator{\sub}{sub}
\newtheorem*{theorem*}{Theorem}
\newtheorem{theorem}{Theorem}
\newtheorem{lemma}[theorem]{Lemma}
\newtheorem{defn}[theorem]{Definition}
\newtheorem*{defn*}{Definition}
\newtheorem*{prop*}{Proposition}
\newtheorem{conj}[theorem]{Conjecture}
\newtheorem*{conj*}{Conjecture}
\newtheorem*{fact*}{Fact}
\tikzset{gon/.style={name=tmp,regular polygon,regular polygon sides=#1,minimum
size=10pt,inner sep=0pt},
polygon side/.style args={#1--#2}{
insert path={(tmp.corner #1)-- (tmp.corner #2)}}}
\newcommand{\FlagGraph}[3][]{\ifnum#2=2%
\tikz[baseline=(tmp1)]{\node[circle,inner sep=0.7pt,fill] (tmp1) at (0,0){};
\node[#1,circle,inner sep=0.7pt,fill] (tmp2) at (0,10pt){};
\ifx#3\empty%
\else
\draw[#1] (tmp1) -- (tmp2);
\fi}
\else%
\tikz[baseline=(tmp.south)]{\node[#1,gon=#2]{};
\foreach \X in {1,...,#2}{\fill (tmp.corner \X) circle (1pt);}
\draw[#1,polygon side/.list={#3}]}
\fi}
\newcommand*{\Dgraph}{\FlagGraph{4}{1--2,2--3,3--4,4--1,1--3}}
\newcommand*{\Kfourgraph}{\FlagGraph{4}{1--2,2--3,3--4,4--1,1--3,2--4}}
\begin{document}
\title[The Upper Matching Conjecture for large graphs]{A proof of the Upper Matching Conjecture \\for large graphs}

\author[E.\ Davies]{Ewan Davies}
\address{Department of Computer Science, University of Colorado Boulder, Boulder, CO}
\email{maths@ewandavies.org}

\author[M.\ Jenssen]{Matthew Jenssen}
\address{
School of Mathematics, University of Birmingham, Edgbaston, Birmingham, B15 2TT, UK}
\email{m.jenssen@bham.ac.uk}

\author[W.\ Perkins]{Will Perkins}
\address{Department of Mathematics, Statistics, and Computer Science, University of Illinois at Chicago, Chicago, IL}
\email{math@willperkins.org}

\date{\today}

\begin{abstract}
  We prove that the `Upper Matching Conjecture' of Friedland, Krop, and Markstr{\"o}m and the analogous conjecture of Kahn for independent sets in regular graphs hold for all large enough graphs as a function of the degree.   That is, for every $d$ and every large enough $n$ divisible by $2d$, a union of $n/(2d)$ copies of the complete $d$-regular bipartite graph maximizes the number of independent sets and matchings of size $k$ for each $k$ over all $d$-regular graphs on $n$ vertices.  To prove this we utilize the cluster expansion for the canonical ensemble of a statistical physics spin model, and we give some further applications of this method to maximizing and minimizing the number of independent sets and matchings of a given size in regular graphs of a given minimum girth.  
\end{abstract}

\maketitle

\section{Introduction}

Let $i_k(G)$ be the number of independent sets of size $k$ in a graph $G$ and let $m_k(G)$ be the number of matchings of size $k$ (that is, of $k$ edges).  Then we can write the independence polynomial and matching polynomial (or matching generating function) of $G$ as
\begin{align*}
  Z_G(\lam)     = \sum_{k \ge 0} i_k(G) \lam^k \quad \text{and} \quad
  Z_G^{m}(\lam) = \sum_{k \ge 0} m_k(G) \lam^k 
\end{align*}
respectively.   Evaluating $Z_G(\lam)$ and $Z^m_G(\lam)$ at $\lam=1$ gives the total number of independent sets and matchings in $G$ respectively, which we denote by $i(G)$ and $m(G)$.

Extremal properties of $i(G)$ and $m(G)$ as well as $Z_G(\lam)$ and $Z^m_G(\lam)$ over the class of $d$-regular graphs have been studied since Granville asked which $d$-regular graph on $n$ vertices maximizes the number of independent sets and Alon~\cite{alon1991independent} conjectured that (when $n$ is divisible by $2d$) the answer was the graph $H_{d,n}$, the union of $n/(2d)$ copies of the complete $d$-regular bipartite graph $K_{d,d}$.  Using a beautiful entropy argument, Kahn~\cite{kahn2001entropy} proved this conjecture over the class of $d$-regular bipartite graphs. Galvin and Tetali~\cite{galvin2004weighted} later gave a broad generalization of Kahn's result, showing in particular that $K_{d,d}$ maximizes $\frac{1}{|V(G)|} \log Z_G(\lam)$ for all $\lam \ge 0$ over $d$-regular bipartite graphs. Zhao~\cite{zhao2010number}, then removed the bipartite restriction in this statement for the independence polynomial, resolving Alon's conjecture as a result.  In his 2001 paper Kahn conjectured a stronger result, that $H_{d,n}$ should maximize \emph{each} coefficient of the independence polynomial.
\begin{conj}[Kahn~\cite{kahn2001entropy}]
  \label{conjKahn}
  For all $d \ge 1$, all $n$ divisible by $2d$,  all $d$-regular graphs $G$ on $n$ vertices, and  all $k$, we have   
  \begin{align*}
    i_k(G) & \le i_k(H_{d,n})\,.
  \end{align*}
\end{conj}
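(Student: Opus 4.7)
The plan is to reduce Kahn's conjecture to a quantitative comparison of the hard-core partition function $Z_G(\lam)$ at a specially chosen fugacity, using Zhao's bound $Z_G(\lam)\le Z_{H_{d,n}}(\lam)$ together with a cluster expansion for $\log Z_G(\lam)$ and a local central limit theorem to pass between the grand canonical and canonical ensembles.

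First I would write $i_k(G)=[\lam^k]Z_G(\lam)$ and extract the coefficient by Cauchy's formula,
\[
i_k(G)=\frac{1}{2\pi}\int_{-\pi}^{\pi} r^{-k}\,e^{-ik\theta}\,Z_G(re^{i\theta})\,d\theta,
\]
taking $r=r_k(G)$ to be the positive real solution of $\lam\,Z_G'(\lam)/Z_G(\lam)=k$, i.e.\ the fugacity at which the hard-core Gibbs measure on $G$ has expected independent-set size $k$. A local central limit theorem for $|I|$ under this measure, together with control of $|Z_G(re^{i\theta})/Z_G(r)|$ for $\theta$ away from $0$, should give the sharp asymptotic
\[
i_k(G)=\frac{Z_G(r_k(G))}{r_k(G)^{k}\sqrt{2\pi\,\var_{r_k(G)}|I|}}\bigl(1+o(1)\bigr),
\]
and similarly for $H_{d,n}$. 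Since $H_{d,n}$ is a disjoint union of identical $K_{d,d}$ components, both its partition function and the relevant variance factorise exactly.

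The analytic input powering the local CLT is a convergent cluster expansion of $\log Z_G(\lam)$ in a disk around each relevant $r_k$. Two complementary expansions are natural: a standard low-$\lam$ polymer expansion valid for $\lam$ below a constant, and a high-$\lam$ ``defect'' expansion whose polymers encode deviations from the ground-state independent sets (on each $K_{d,d}$ component these are the two sides), valid for $\lam$ above a constant. Both converge by the Kotecky--Preiss criterion, and the leading terms depend only on a short list of local subgraph counts of $G$, each of which is extremised over $d$-regular graphs by $H_{d,n}$. Combining Zhao's inequality for the numerator $Z_G(r_k)$ with cluster-expansion estimates for the saddle point $r_k(G)$ and the variance $\var_{r_k(G)}|I|$ should yield $i_k(G)\le i_k(H_{d,n})$ for each $k$ in the regime where the expansion converges.

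The hard part will be the intermediate window of $k$ in which $r_k$ lies in the transition region, so that neither the low- nor the high-fugacity expansion converges; here one must either enlarge the radius of convergence using the $d$-regular, locally sparse structure of $G$, or patch the two regimes via a separate combinatorial argument. Secondary difficulties are to make the $o(1)$ error in the local CLT effective uniformly in $k$ so as to deduce the exact inequality for all sufficiently large $n$ (not just asymptotically), and to treat the boundary cases where the saddle point degenerates: for very small $k$ the crude estimate $i_k(G)\le\binom{n}{k}$ compared against the explicit formula for $i_k(H_{d,n})$ suffices, while for $k$ close to $n/2$ a direct structural argument about near-maximum independent sets should work, using in particular that $H_{d,n}$ has exactly $2^{n/(2d)}$ maximum independent sets.
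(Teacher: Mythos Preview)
First, a framing remark: the paper does not prove Conjecture~\ref{conjKahn} in full; it proves Theorem~\ref{thmMain}, the large-$n$ version, by combining two pieces. The range $k\ge \eps n$ is imported from~\cite{DaviesCoefficients} (Theorem~\ref{thmStability}), and the new contribution of this paper is the range $k\le \eps n$, handled by a cluster expansion for the \emph{canonical} ensemble, i.e.\ an expansion of $\log i_k(G)$ itself rather than of $\log Z_G(\lam)$.

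Your proposal is essentially the grand-canonical approach of~\cite{davies2015independentns,DaviesCoefficients}: extract $i_k$ from $Z_G$ via a saddle point/local CLT, and feed in Zhao's bound plus stability. That is exactly what powers Theorem~\ref{thmStability}, and it is known to work for $k\ge \eps n$. The genuine gap is the small-$k$ regime, and there your sketch breaks down in two places.

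\textbf{(1) The local CLT error swamps the signal when $k$ is small.} The asymptotic $i_k(G)=Z_G(r_k)r_k^{-k}(2\pi\var|I|)^{-1/2}(1+o(1))$ has multiplicative error of order $1/\sqrt{\var|I|}\asymp 1/\sqrt{k}$. But for $d$-regular $G$ of girth at least $3$ one has $i_k(G)=i_k(H_{d,n})$ for $k\le 3$, and for $k\ge 4$ the relative gap $(i_k(H_{d,n})-i_k(G))/i_k(H_{d,n})$ is only of order $(k/n)^{g-1}$ with $g=4$ (it is governed by the $C_4$-density gap; see Lemma~\ref{lemGirthGap} and Lemma~\ref{lemGirthNoH}). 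For $k=o(n)$ this is $o(1)$ and can be far smaller than $1/\sqrt{k}$, so no amount of uniformity in the $o(1)$ of the local CLT will recover the exact inequality. Your fallback ``for very small $k$ the crude estimate $i_k(G)\le\binom{n}{k}$ suffices'' is simply false: $\binom{n}{k}$ exceeds $i_k(H_{d,n})$ already at $k=2$.

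\textbf{(2) The high-fugacity defect expansion is only defined for $H_{d,n}$.} A polymer model whose polymers are deviations from ``the two sides of each $K_{d,d}$'' makes sense for $H_{d,n}$, but an arbitrary $d$-regular $G$ has no such ground-state structure, so there is no corresponding expansion of $\log Z_G(\lam)$ for large $\lam$. This is not a technicality: without it you have no way to control $Z_G$ or $r_k(G)$ in the large-$k$ window except via the stability machinery of~\cite{DaviesCoefficients}, which again only yields $k\ge \eps n$.

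What the paper does instead for $k\le\eps n$ is to bypass $Z_G(\lam)$ entirely. One writes $i_k(G)=\frac{n^k}{k!}\,\Xi_k(G)$ where $\Xi_k(G)$ is an abstract polymer partition function on $[k]$ (Section~\ref{secCluster}), verifies Koteck\'y--Preiss for $k\le e^{-5}n/(d+1)$, and reads off that the leading nontrivial terms of $\log\Xi_k(H_n)-\log\Xi_k(G)$ are governed by $t(C_{g-1})$ and $t(C_g)$. The strict $(d,4)$-optimality of $K_{d,d}$ (Lemma~\ref{lemkddopt}) then gives a positive lower bound on this difference of the right order, namely $\Omega\bigl((k/n)^g n\bigr)$, which beats the cluster-expansion tail. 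No saddle point, no local CLT, and crucially no passage through $Z_G(\lam)$.
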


The history of the problem for matchings is somewhat different but ends in roughly the same place.  For an $n$ vertex graph $G$, $m_{n/2}(G)$ counts the number of perfect matchings of $G$.  Bregman's Theorem~\cite{bregman1973some} (and its extension from bipartite graphs to general graphs by Kahn and Lov{\'a}sz) gives an upper bound on $m_{n/2}(G)$ in terms of the degree sequence of $G$.  In the case where $G$ is $d$-regular and $n$ is divisible by $2d$, the result states that $m_{n/2}(G)$ is maximized by $H_{d,n}$.  Both Kahn's result on independent sets and Bregman's theorem have elegant proofs using the entropy method~\cite{radhakrishnan1997entropy,galvin2014three}.

In 2008, Friedland, Krop, and Markstr\"{o}m~\cite{friedland2008number} made the equivalent of Conjecture~\ref{conjKahn} for matchings.
\begin{conj}[Friedland, Krop, Markstr\"{o}m~\cite{friedland2008number}]
  \label{conjUM}
   For all $d \ge 1$, all $n$ divisible by $2d$,  all $d$-regular graphs $G$ on $n$ vertices, and  all $k$, we have  
  \begin{align*}
    m_k(G) & \le m_k(H_{d,n})\,.
  \end{align*}
\end{conj}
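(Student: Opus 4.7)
The plan is to prove Conjecture~\ref{conjUM} for large $n$ via the cluster expansion / canonical ensemble technique that has recently been used to resolve several extremal problems for partition functions on $d$-regular graphs. The starting point is to view $Z_G^m(\lambda)$ as the partition function of the monomer-dimer (hard-core on the line graph) model, and to relate the individual coefficient $m_k(G)$ to $Z_G^m$ by tuning the fugacity. Concretely, let $\lambda = \lambda_k(G)$ be chosen so that under the Gibbs measure the expected matching size is exactly $k$; if one can establish a sufficiently uniform local central limit theorem for the matching size, then
\begin{equation*}
  m_k(G) = \frac{Z_G^m(\lambda_k)}{\lambda_k^{k}\sqrt{2\pi\sigma_k^2(G)}}\bigl(1+o(1)\bigr),
\end{equation*}
and the same asymptotic holds for $H_{d,n}$ at its own tuned fugacity. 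The coefficient-wise inequality is then reduced to comparing $Z_G^m$ and the variance $\sigma_k^2$ on the correct scale.

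The second step is to obtain the required fine asymptotics for $Z_G^m(\lambda)$. For small to moderate $\lambda$ (equivalently, $k/|E(G)|$ bounded away from the density of a perfect matching), the standard abstract polymer / cluster expansion around the empty matching, truncated at a sufficient depth, yields
\begin{equation*}
  \log Z_G^m(\lambda) \;=\; \sum_{H} c_H(\lambda)\,\sub(H,G) + \text{(small error)},
\end{equation*}
where the sum runs over a finite family of connected graphs $H$ and $\sub(H,G)$ counts labelled copies. Because $G$ is $d$-regular, these local counts $\sub(H,G)$ for a given $H$ are simultaneously maximized by $H_{d,n}$ when $H$ is bipartite (as $K_{d,d}$ is the densest local structure a $d$-regular graph can have) and penalties arise whenever $G$ contains short odd cycles. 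For $\lambda$ close to the perfect-matching regime the same philosophy applies but the polymer model must be set up around the set of perfect matchings of $K_{d,d}$, so that defects from perfect matchings play the role of polymers; this part of the argument uses the fact that $H_{d,n}$ has a very rich collection of near-perfect matchings.

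Combining these ingredients, for each $k$ in an appropriate range I would argue that $\lambda_k(G) \ge \lambda_k(H_{d,n})$ (since $G$ typically has fewer matchings and hence needs a larger fugacity to reach density $k$), and that the cluster expansion forces $\log Z_G^m(\lambda_k(G)) - k\log \lambda_k(G) \le \log Z_{H_{d,n}}^m(\lambda_k(H_{d,n})) - k\log \lambda_k(H_{d,n}) - \Omega(n)$ whenever $G \not\cong H_{d,n}$, with a matching upper bound on $\sigma_k^2(G)/\sigma_k^2(H_{d,n})$ that is only polynomial in $n$. This will absorb the polynomial prefactors from the local CLT and yield $m_k(G) \le m_k(H_{d,n})$ for all such $k$. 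The boundary ranges (very small $k$, where $m_k(G) \le \binom{nd/2}{k}$ and a direct combinatorial bound suffices, and $k$ within $O(1)$ of $n/2$, which reduces to Bregman--Kahn--Lov\'asz for perfect matchings together with a small defect count) are to be dispatched separately.

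The main obstacle will be the intermediate range of $k$, where neither the empty-matching expansion nor the perfect-matching expansion has a small parameter, and one has to match the two expansions while keeping error terms small enough to yield a true inequality $m_k(G) \le m_k(H_{d,n})$ rather than merely $m_k(G) \le (1+o(1))m_k(H_{d,n})$. Overcoming this will likely require a careful choice of polymer truncation depth that depends on $k/n$, together with a quantitative local CLT whose error terms can be controlled by the \emph{gap} in the cluster-expansion comparison, so that the $\Omega(n)$ savings for $G\neq H_{d,n}$ dominate the at-most-polynomial error from the Gaussian approximation.
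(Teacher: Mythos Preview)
Your outline is essentially the strategy of~\cite{DaviesCoefficients}, and indeed the paper simply quotes that result (Theorem~\ref{thmStability}) to handle all $k\ge \eps n$. The genuine gap is the range $4\le k\le \eps n$, which you dismiss as a boundary case where ``a direct combinatorial bound suffices''. It does not. For $k$ in this range the quantities $m_k(G)$ and $m_k(H_{d,n})$ can be extremely close, and the local CLT route cannot separate them: your claimed $\Omega(n)$ gap in $\log Z^m_G(\lambda_k)-k\log\lambda_k$ is simply false when $G$ differs from $H_{d,n}$ in only a bounded number of components. In that situation $\log Z^m_G(\lambda)-\log Z^m_{H_{d,n}}(\lambda)=O(1)$ uniformly in $\lambda$, so the polynomial error from any Gaussian approximation swamps the signal. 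The paper has to work around exactly this issue, splitting $G=G_0\cup G_H$ according to which components are already $K_{d,d}$ and then doing a delicate convolution argument (the $\alpha<1/10$ case in Section~\ref{secGirth}).

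More importantly, the paper's new idea for $k\le \eps n$ is \emph{not} to pass through the grand canonical partition function at all, but to write a cluster expansion directly for the canonical quantity $\log m_k(G)=\log\bigl((n^k/k!)\,\Xi_k(L(G))\bigr)$, where $\Xi_k$ is a polymer partition function on $[k]$ with weights depending on $G$. Convergence holds for $k\le \eps n$ (Lemma~\ref{lemConverge}), and the leading terms are governed by $t(C_3)$ and $t(C_4)$; the strict $(d,4)$-optimality of $K_{d,d}$ (Lemma~\ref{lemkddopt}) then gives the inequality. Your ``main obstacle'' of matching an empty-matching expansion to a perfect-matching expansion never arises, because the large-$k$ regime is already covered by~\cite{DaviesCoefficients} and the small-$k$ regime is handled by this canonical expansion rather than by tuning $\lambda$.
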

The authors of \cite{friedland2008number} named this the `Upper Matching Conjecture'.
A weaker form of this conjecture was simply that $H_{d,n}$ maximizes $m(G)$ or $Z_G^m(\lam)$ over $d$-regular graphs, but unlike the case of independent sets no entropy-based proof of this is known.  In light of this (and the appealing name `Upper Matching Conjecture'), somewhat more attention was paid to Conjecture~\ref{conjUM} than Conjecture~\ref{conjKahn}.  Partial progress was made on both conjectures in a series of papers including~\cite{friedland2008validations,carroll2009matchings,ilinca2013asymptotics,perkins2015birthday}, though the upper bounds on $m_k(G)$ remained an exponential factor larger than $m_k(H_{d,n})$.

In 2017, the current authors and Roberts introduced a new method for proving extremal bounds on graph polynomials such as $Z_G$ and $Z_G^m$ based on logarithmic derivatives and linear programming relaxations~\cite{davies2015independentns}.  One of the results proved with this method was that $K_{d,d}$ maximizes $\frac{1}{|V(G)|} \log Z_G^m(\lam)$ for all $\lam$ over $d$-regular graphs, and as a consequence, $H_{d,n}$ maximizes $m(G)$.

In addition, \cite{davies2015independentns} provided a new approach to Conjectures~\ref{conjKahn} and~\ref{conjUM}, by developing generic methods for transferring bounds on graph polynomials to bounds on their individual coefficients.   This approach gave upper bounds on $i_k(G)$ and $m_k(G)$ within a factor $O(\sqrt n)$ of the conjectured bounds.
Then in~\cite{DaviesCoefficients}, the same authors gave a more sophisticated version of this approach using stability-type results for $Z_G$ and $Z_G^m$ to show that Conjectures~\ref{conjKahn} and~\ref{conjUM} hold when $n$ is large and $k\ge \eps n$ (see Theorem~\ref{thmStability} below for the precise statement).
In this paper, we use a new method to deal with small values of $k$ (that is, $k < \eps n)$.  Combined with the results from~\cite{DaviesCoefficients} this allows us to prove our main result, that Conjectures~\ref{conjKahn} and~\ref{conjUM} hold for $n\geq n(d)$ large enough.

\begin{theorem}
  \label{thmMain}
  For all $d \ge 2$ there is $N$ large enough so that for all $n \ge N$  divisible by $2d$,  all $d$-regular graphs $G$ on $n$ vertices, and all $k$,
  \begin{align*}
    i_k(G) & \le i_k(H_{d,n}) \quad \text{and} \quad
    m_k(G)  \le m_k(H_{d,n}) \,.
  \end{align*}
    Moreover, if $G$ is not isomorphic to $H_{d,n}$ then the inequalities are strict for all $4 \le k \le n/2$.
\end{theorem}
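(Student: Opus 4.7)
The plan is to split the range of $k$ into two regimes. For $k \ge \eps n$ with $\eps = \eps(d) > 0$ sufficiently small, both inequalities follow directly from the stability-based bound of \cite{DaviesCoefficients} recorded below as Theorem~\ref{thmStability}. The new content is the regime $k < \eps n$, which I would handle via a cluster expansion for the \emph{canonical} ensemble (fixed particle number $k$ rather than fixed activity $\lambda$) of the hard-core and monomer--dimer spin models.

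Writing
\[
  i_k(G) = \binom{n}{k}\,\P(U_k \text{ is independent in } G), \qquad
  m_k(G) = \binom{dn/2}{k}\,\P(F_k \text{ is a matching in } G),
\]
where $U_k$, $F_k$ are uniform $k$-subsets of $V(G)$ and $E(G)$ respectively, the binomial prefactors agree between $G$ and $H_{d,n}$, so it suffices to compare the probabilities. Inclusion-exclusion over the forbidden configurations (edges for $U_k$; pairs of incident edges for $F_k$) sets up a polymer model whose polymers are connected subgraphs of $G$ (or of its line graph) and whose activities are hypergeometric ratios $\binom{n-v(S)}{k-v(S)}/\binom{n}{k}$. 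A Kotecky-Preiss-type condition should give a convergent cluster expansion of $\log \P(\cdot)$ uniformly in $k \le \eps n$, expressing $\log \P$ as a sum indexed by isomorphism types of connected subgraphs $H$, with $H$-term equal to a coefficient depending only on $H$, $k$, $n$ times the number of copies of $H$ in $G$.

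With convergence in hand, the inequality $\P_G \le \P_{H_{d,n}}$ reduces to a combinatorial comparison: the leading contribution $-e(G)\cdot\binom{n-2}{k-2}/\binom{n}{k}$ agrees for $G$ and $H_{d,n}$ (both are $d$-regular), and the remaining groups of terms are controlled by counts of triangles, short cycles and short paths in $G$. Since $H_{d,n}$ is a disjoint union of copies of $K_{d,d}$, these counts are either forced by regularity or extremized by $H_{d,n}$, which after grouping yields the desired inequality. For the strict-inequality clause I would locate the smallest $k$ at which the cluster expansion witnesses a genuine difference between $H_{d,n}$ and any non-isomorphic $d$-regular $G$: for $i_k$, strictness begins at $k=4$ because $i_1, i_2$ are forced by $(n,d)$ and $i_3$ is already forced on triangle-free graphs (where it equals $i_3(H_{d,n})$), so a new subgraph count must appear at $k=4$; an analogous analysis handles $m_k$. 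Strictness then propagates to all larger $k$ in the small-$k$ regime via tail bounds on the cluster expansion, and to $k \ge \eps n$ via the stability portion of Theorem~\ref{thmStability}.

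The principal technical obstacle will be proving uniform convergence of the canonical cluster expansion up to $k = \eps n$: the polymer activities grow with $k/n$, and the Kotecky-Preiss criterion degrades as $k/n$ approaches the threshold. Choosing $\eps$ and a polymer-size cutoff compatibly so that the two regimes dovetail and together cover every $k$, and simultaneously controlling the tail of the expansion carefully enough that the sign of the difference is determined by a finite set of small-subgraph comparisons, is the delicate part; once this is in hand the extremality of $H_{d,n}$ at each finite subgraph level is a purely combinatorial check.
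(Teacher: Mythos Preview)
Your high-level strategy is exactly the paper's: split at $k=\eps n$, invoke Theorem~\ref{thmStability} for $k\ge\eps n$, and for $k<\eps n$ use a canonical-ensemble cluster expansion to reduce the comparison to small-subgraph counts where $K_{d,d}$ is extremal. But your proposed polymer model has a genuine gap. You take polymers to be connected subgraphs $S$ of $G$ with activity $\binom{n-v(S)}{k-v(S)}/\binom{n}{k}$; for the polymer partition function to reproduce the inclusion--exclusion expansion of $\P(U_k\text{ independent})$, the contribution of a disconnected $F=S_1\cup S_2$ (vertex-disjoint) must factor as $w(S_1)w(S_2)$. It does not: $\binom{n-v_1-v_2}{k-v_1-v_2}/\binom{n}{k}\ne\prod_i\binom{n-v_i}{k-v_i}/\binom{n}{k}$, because hypergeometric probabilities do not factor over disjoint events. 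This non-factorization is exactly what distinguishes the canonical from the grand canonical ensemble, and your setup as written does not yield a polymer model at all.

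The paper's fix (following Pulvirenti--Tsagkarogiannis) is to change the ground set: polymers are subsets $S\subseteq[k]$ of size $\ge 2$, i.e.\ subsets of the particle \emph{labels}, with weight
\[
w_G(S)=n^{-|S|}\sum_{\phi:S\to V}\sum_{F\in\cC_S}\prod_{ij\in E(F)}(\mathbf{1}_{\phi(i)\phi(j)\notin E^\circ}-1)\,.
\]
Starting from $i_k(G)=\frac{1}{k!}\sum_{\phi:[k]\to V}\prod_{ij}\mathbf{1}_{\phi(i)\phi(j)\notin E^\circ}$ and expanding, the sum over $\phi$ factors exactly over any partition of $[k]$, giving $i_k(G)=\frac{n^k}{k!}\,\Xi_k(G)$ for a bona fide polymer partition function. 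Koteck\'y--Preiss then applies with $|w_G(S)|\le |S|^{|S|-2}((d+1)/n)^{|S|-1}$. The subgraph interpretation you want survives, since $w_G(S)$ is a signed combination of homomorphism densities $t(F,G^\circ)$ over connected $F$ on $|S|$ vertices; tree terms cancel between $G$ and $H_{d,n}$ by regularity, and the first non-trivial terms are the $C_3$ and $C_4$ densities, which is where the strict $(d,4)$-optimality of $K_{d,d}$ enters.

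One further point you do not anticipate: the cluster-expansion tail is $O(k\gamma^g)$ with $\gamma\asymp k/n$, while the gain from the $C_4$ comparison is proportional to $\inj(C_4,H_{d,n})-\inj(C_4,G)$. If almost all components of $G$ are already $K_{d,d}$'s this gap can be $o(n)$ and the tail bound does not dominate. The paper therefore splits again: when at least $10\%$ of the vertices lie outside $K_{d,d}$ components the cluster expansion gives an exponential gap directly; otherwise one writes $i_k(G)$ and $i_k(H_{d,n})$ as convolutions over the shared $K_{d,d}$ part and applies the first case to the remainder.
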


We note that for a $d$-regular graph $G$ on $n$ vertices, $i_k(G)=0$ for $k>n/2$, and $i_k(G)=i_k(H_{d,n})$ for $k\leq 3$ if $G$ is triangle free (similarly for $m_k(G)$). Thus the range $4 \le k \le n/2$ in Theorem~\ref{thmMain} is best possible.

\subsection{New techniques}

To address the case $k < \eps n$  we turn to a technique from statistical physics, the cluster expansion, which was originally developed to study the phase diagrams of gases and spin systems~\cite{mayer1940statistical} but which has also found a number of applications in combinatorics and graph theory~\cite{scott2005repulsive,sokal2001bounds,borgs2006absence,borgs2013left,jenssen2019revisited,balogh2020independent}.

In its basic form, the cluster expansion is an infinite series that formally represents the logarithm of a function like $Z_G(\lam)$.  To make practical use of the cluster expansion, one must know for what values of $\lam$ this series converges, and general results tell us that $\lam \le 1/(ed)$ suffices for graphs of maximum degree $d$.  Moreover, when $\lam$ is small enough (as a function of $d$ but not of $n$) the dominant terms of the cluster expansion come from the counts of small subgraphs (edges, triangles, four-cycles, etc.).  This holds more generally; instead of considering the independence polynomial, one can consider a vertex-weighted homomorphism-counting polynomial $Z^H_G$ for any graph $H$. Borgs, Chayes, Kahn, and Lovasz~\cite{borgs2013left} used the cluster expansion to show that for small weights (the equivalent of $\lam \le 1/(ed)$), $\frac{1}{n}\log Z^H_{G_n}$ converges if the sequence $G_n$ of graphs on $n$ vertices is Benjamini-Schramm convergent; that is, its small subgraph densities converge.

Conjectures~\ref{conjKahn} and~\ref{conjUM}, however, deal with the individual coefficients of $Z_G$ and $Z^m_G$ instead of the values of the polynomials themselves. Fortunately there is a natural statistical physics perspective to this as well.  Polynomials such as $Z_G$ and $Z_G^m$ are the partition functions of statistical mechanics models, the hard-core model and the monomer-dimer model respectively.  These are probability distributions over the independent sets and matchings of a graph $G$, in which e.g.\ each independent set has probability $\lam^{|I|}/Z_G(\lam)$. This distribution is known as the \emph{grand canonical ensemble} and represents particles of a gas (occupied vertices of the independent set) in a volume in thermal equilibrium within a much larger volume (and so particles can enter and leave the small volume).  There is another natural probability distribution over independent sets: the uniform distribution over independent sets of size $k$, which represents particles of a gas in a confined volume (the particles cannot escape so their number $k$ is fixed).  This is the \emph{canonical ensemble}, and its partition function is simply $i_k(G)$.   Much of the intuition behind the methods of~\cite{DaviesCoefficients} came from comparing the probabilistic behavior of the grand canonical and canonical ensembles.            

There is also a cluster expansion for the canonical ensemble, first presented by Pulvirenti and Tsagkarogiannis in~\cite{pulvirenti2012cluster} in the setting of Gibbs point processes.  In our setting this gives an infinite series representation of $\log i_k(G)$, and for small $k$ again the dominant terms of its cluster expansion are counts of small subgraphs.  We will use two very simple facts about $K_{d,d}$ to show that it maximizes $i_k$: it has no triangles and it has the highest $4$-cycle density of any $d$-regular graph.
The method is much more general and can be applied to independent sets, matchings, and graph homomorphisms over various classes of graphs.

In a little more detail, the following informal meta-theorem follows naturally from the cluster expansion arguments we develop here.  Let $\cG$ be a class of graphs (e.g. $d$-regular graphs, $d$-regular bipartite graphs, line graphs of $d$-regular graphs, etc.), and let $H \in \cG$ be uniquely optimal for maximizing independent sets of size $j_0$; that is, for $j<j_0$ a union of copies of $H$ has at least as many independent sets of size $j$ as any other graph $G \in \cG$ on the same number of vertices, and for $j = j_0$ it strictly maximizes $i_j(G)$.   Then a union of copies of $H$ is the maximizer of $i_k$ for all $ k \le \eps n$ and $H$ is the maximizer of $\frac{1}{n} \log Z_G(\lam)$ for all $\lam \le \eps$.  See Theorem~\ref{thmGirth} below for one example of such a result, and Theorem~\ref{thmGirthMin} for a similar minimization result.  

Concretely, we can consider the problem of minimizing the number of matchings of a given size in a regular graph (the  minimization problem for independent sets is rather straightforward, see~\cite{CR14} and the discussion in~\cite{DaviesCoefficients}).   For even $d$, a natural conjecture is that $K_{d+1}$  minimizes $\frac{1}{|V(G)|}\log Z^m_G(\lam)$ for all $\lam$ and a union of copies of $K_{d+1}$ minimizes each coefficient of $Z_G^m$.  For odd $d$, the clique $K_{d+1}$ contains a perfect matching so cannot minimize $\frac{1}{|V(G)|}\log Z^m_G(\lam)$ over $d$-regular graphs for large $\lam$.   Csikv{\'a}ri and others have conjectured that it is the minimizer for $\lam \le 1$.
Similarly, an $n$-vertex union of these cliques cannot minimize the high order coefficients of the matching polynomial when an $n$-vertex $d$-regular graph with no perfect matching exists. 
We make partial progress on these minimization problems for matchings with the following pair of results.

\begin{theorem}\label{thmCliquemin}
There is a constant $c>0$ such that for all $d\ge 2$ and $0<\lam<cd^{-4} $, we have 
\begin{equation}
\label{eqKdMin}
 \frac{1}{|V(G)|}\log Z^m_G(\lam) \ge \frac{1}{d+1}\log Z^m_{K_{d+1}}(\lam) 
 \end{equation}
for all $d$-regular graphs $G$, with strict inequality if $G$ is not a disjoint union of copies of $K_{d+1}$. 

For all $d\ge 2$ there is a constant $\xi = \xi(d)>0$ such that for all $n$ divisible by $d+1$, a disjoint union of $n/(d+1)$ copies of $K_{d+1}$ minimizes $m_k(G)$ for all $k\le \xi n$. 
\end{theorem}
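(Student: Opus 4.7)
The plan is to apply the cluster expansion of the monomer--dimer model for part (i), and the canonical-ensemble analogue due to Pulvirenti--Tsagkarogiannis for part (ii), isolating in both cases the triangle count as the first structural invariant distinguishing $d$-regular graphs and exploiting that disjoint unions of $K_{d+1}$ uniquely maximise $T(G)/n$ among $d$-regular graphs.

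For part (i), view $Z_G^m(\lam)=\sum_M\lam^{|M|}$ as a hard-core polymer partition function on the line graph of $G$: polymers are edges of $G$ with weight $\lam$, declared incompatible iff they share a vertex. Each polymer has at most $2(d-1)$ incompatible neighbours, so the Koteck\'y--Preiss criterion gives convergence of the cluster expansion with a per-vertex tail bound $O((Cd\lam)^{k+1})$ beyond order $\lam^k$, and $\lam<cd^{-4}$ leaves ample room. Writing the coefficient of $\lam^3$ in $\log Z_G^m(\lam)$ as $m_3(G)-m_1(G)m_2(G)+m_1(G)^3/3$ and applying inclusion--exclusion in terms of $e(G)$, incident-edge pairs $n\binom{d}{2}$, stars $n\binom{d}{3}$, and triangles $T(G)$, the only structural $G$-dependence at this order is through $T(G)$: explicitly,
\[
\frac{1}{n}\log Z_G^m(\lam)-\frac{1}{d+1}\log Z_{K_{d+1}}^m(\lam)=\lam^3\!\left(\frac{d(d-1)}{6}-\frac{T(G)}{n}\right)+R(G,\lam),
\]
with $|R(G,\lam)|\le C'(d\lam)^4$. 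Since $T(G)/n\le d(d-1)/6$ with equality iff every vertex neighbourhood in $G$ is a clique (equivalently, iff $G$ is a disjoint union of copies of $K_{d+1}$), this yields the non-strict inequality for $\lam<cd^{-4}$ with $c$ small.

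For the strict inequality I exploit the locality of the cluster expansion. Each cluster is supported on a single component of $G$; distributing each cluster's contribution evenly among the vertices of its support yields a decomposition $\log Z_G^m(\lam)=\sum_v g_v(G,\lam)$ in which $g_v$ depends only on $v$'s component. For $v$ in a $K_{d+1}$ component, $g_v(G,\lam)=\frac{1}{d+1}\log Z_{K_{d+1}}^m(\lam)$ exactly, with the analogous cancellation for the remainder $R$. Hence
\[
\log Z_G^m(\lam)-\tfrac{n}{d+1}\log Z_{K_{d+1}}^m(\lam)=\lam^3\!\sum_{v\in S}\!\tfrac{\binom{d}{2}-T_v(G)}{3}+\tilde R(G,\lam),
\]
where $S$ is the set of defect vertices (those not lying in a $K_{d+1}$ component), each contributing at least $\lam^3/3$ to the leading sum, and $|\tilde R(G,\lam)|\le C''|S|(d\lam)^4$. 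For $\lam<cd^{-4}$ with $c$ small, the per-defect-vertex leading term beats the per-defect-vertex remainder, yielding strict inequality whenever $|S|\ge 1$, i.e.\ whenever $G$ is not a disjoint union of copies of $K_{d+1}$. Part (ii) then follows by applying the Pulvirenti--Tsagkarogiannis canonical-ensemble cluster expansion for $\log m_k(G)$, convergent for $k\le\xi(d)n$, which admits the same structural decomposition with leading $G$-dependence through $T(G)/n$ and sign favouring maximum triangle density.

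\textbf{Main obstacle.} The key difficulty is that the global triangle deficit of a non-extremal $G$ can be as small as $O(1/n)$, so the coarse per-vertex cluster-expansion error of $O((d\lam)^4)$ cannot be beaten uniformly in $n$ without the localisation above. Verifying that both $R$ and $\tilde R$ genuinely restrict to defect vertices --- i.e.\ that clusters contained entirely within $K_{d+1}$-components contribute identically in $G$ and in the reference disjoint union of $K_{d+1}$'s --- is the technical heart of the argument and relies on the cluster expansion's intrinsic locality. In part (ii), the additional obstacle is verifying that the canonical-ensemble expansion's triangle coefficient retains its sign throughout the full range $k\le\xi(d)n$, and establishing the convergence and error estimates for that more delicate expansion.
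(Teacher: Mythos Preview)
Your proposal is correct and follows essentially the same route as the paper: cluster expansion for the monomer--dimer model (grand canonical) for part (i), with the $\lambda^3$ term isolating the triangle density and the Koteck\'y--Preiss tail bound controlling the remainder, and the canonical-ensemble cluster expansion for part (ii). The paper handles strict inequality slightly more cleanly than your per-vertex localisation: it simply notes that $\log Z^m_G$ is additive over components, reduces without loss of generality to $G$ having no $K_{d+1}$ component, and then uses that every vertex of such a $G$ lies in a non-triangle triple of neighbours (so $\inj(C_3,G)/n\le d(d-1)-2$), which is exactly your observation that each defect vertex has $T_v\le\binom{d}{2}-1$ but avoids the need to track clusters by support.
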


Borb{\'e}nyi and Csikv{\'a}ri~\cite{csikvari20} have independently proved the first statement of Theorem~\ref{thmCliquemin} for a wider range of  $\lam$.

\subsection{Related work}
The problems of maximizing and minimizing the number of independent sets and matchings in regular graphs have many extensions and generalizations, from  asking which regular graphs maximize or minimize the number of homomorphisms to a given fixed graph $H$ (e.g. the number proper $q$-colorings with $H = K_q$) .  Results and open problems in the area are given in Zhao's survey~\cite{zhao2017extremal}.  Further extensions consider irregular graphs with bounds that depend on the degree sequence~\cite{sah2019number}.  Often in these problems the extremal graph is $K_{d,d}$, $K_{d+1}$, or, in a limiting sense, the infinite $d$-regular tree~\cite{csikvari2016extremal}.  This is not always the case, however, as Sernau gave counterexamples to some previous conjectures~\cite{sernau2018graph}.  Nevertheless, for maximizing the number of homomorphisms to a fixed graph over the class of $d$-regular, triangle-free graphs, $K_{d,d}$ is always optimal~\cite{sah2020reverse}.  One can also consider similar questions for hypergraphs, where the above problems are more difficult and remain open in general.  Some bounds and conjectures for independent sets in hypergraphs are given in~\cite{ordentlich2000two,cohen2020number,balogh2020counting}.  One takeaway from the methods of the current paper is that understanding the maximizer and minimizer of small cycle counts in the class of (hyper)graphs considered can give a good indication of the plausibility of a conjectured extremal result.

\subsection{Organization}

In Section~\ref{secProofOutline} we outline the proof of Theorem~\ref{thmMain}, reducing it to a more general statement about independent sets and matchings in regular graphs of a given minimum girth.  In Section~\ref{secCluster} we present the canonical ensemble cluster expansion and give sufficient conditions for its convergence along with tail bounds based on the Koteck\'{y}--Preiss condition~\cite{kotecky1986cluster}.   In Section~\ref{secGirth} we use this to complete the proof of Theorem~\ref{thmMain} for independent sets, and in Section~\ref{secMatchings} we extend this to matchings.   In Section~\ref{secConclude} we prove Theorem~\ref{thmCliquemin} and discuss possible extensions of these results.

\section{Proof outline for Theorem~\ref{thmMain}}
\label{secProofOutline}

The main new technique we introduce in this paper is a way to use the canonical ensemble cluster expansion to bound $i_k(G)$ (and $m_k(G)$) for small values of $k$, that is, for $k \le \eps n$ where $\eps$ depends only on $d$. 
We do this in considerably more generality since the proof is the same and the more general statement gives more intuition. 

The \emph{girth} of a graph is the length of its shortest cycle, and a \emph{homomorphism} from $F$ to $G$ is a map $\phi:V(F)\to V(G)$ such that $ij\in E(F)\Longrightarrow \phi(i)\phi(j)\in E(G)$. 
We write $\inj(F,G)$ for the number of injective homomorphisms from $F$ to $G$. 

\begin{defn}
For $d,g\geq 2$, we call a graph $H$ strictly $(d,g)$-optimal if it is $d$-regular of girth at least $g$, and there exists $\eta=\eta(d,g)>0$ such that
\[ \frac{1}{v(G)}\inj(C_g, G) \le \frac{1}{v(H)}\inj(C_g, H) - \eta \]
for all connected $d$-regular graphs $G$ of girth at least $g-1$ which are not equal to $H$.
\end{defn}

The only non-trivial property of $K_{d,d}$ that we require for the proof of Theorem~\ref{thmMain}
is that it is strictly $(d,4)$-optimal.

\begin{lemma}\label{lemkddopt}
For $d\ge2$,
$K_{d,d}$ is strictly $(d,4)$-optimal.
\end{lemma}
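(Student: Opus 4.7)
The plan is to express everything through the adjacency matrix trace $\operatorname{tr}(A^4)$ and close a uniform gap using a local bound indexed by vertex \emph{twin classes}. Writing $c_{uv}=(A^2)_{uv}=|N(u)\cap N(v)|$ for a $d$-regular graph $G$ on $n$ vertices, a direct count of ordered injective copies of $C_4$ by conditioning on the ``diagonal'' pair $(v_1,v_3)$ gives
\[
  \inj(C_4,G) \;=\; \sum_{u\ne v} c_{uv}(c_{uv}-1) \;=\; \operatorname{tr}(A^4)\;-\;nd(2d-1),
\]
using $c_{uu}=d$ and the row-sum $\sum_v c_{uv}=d^2$. For $K_{d,d}$ the nonzero eigenvalues of $A$ are $\pm d$, so $\operatorname{tr}(A^4)/n=d^3$ and $\inj(C_4,K_{d,d})/v(K_{d,d})=d(d-1)^2$. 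With this identity in hand, the lemma reduces to showing $\operatorname{tr}(A^4)/n\le d^3 - d$ for every connected $d$-regular $G\ne K_{d,d}$, which will yield $\eta:=d$.

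The heart of the argument is a local bound on $w_4(u):=(A^4)_{uu}=\sum_v c_{uv}^2$. The key observation is that $c_{uv}\le d$ for all $v$, with $c_{uv}=d$ for $v\ne u$ forcing $N(v)=N(u)$, i.e.\ $v$ is a \emph{twin} of $u$. Writing $k_u$ for the number of twins of $u$ and splitting the sum into the closed twin class $\{u\}\cup\{\text{twins of }u\}$ (where $c_{uv}=d$) and its complement (where $c_{uv}\le d-1$, so $c_{uv}^2\le(d-1)c_{uv}$), I would combine this with $\sum_v c_{uv}=d^2$ to obtain
\[
  w_4(u) \;\le\; d^3 - d(d-1-k_u).
\]

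To finish, I would argue that a connected $d$-regular $G\ne K_{d,d}$ has no vertex $u$ with $k_u=d-1$. In that case the closed twin class has $d$ elements with common neighborhood $N(u)$ of size $d$; each $w\in N(u)$ has degree $d$ and all twins among its neighbors, forcing $N(w)$ to equal the twin class itself. This produces an induced $K_{d,d}$ component, contradicting the combination of connectedness and $G\ne K_{d,d}$. Hence $k_u\le d-2$ for every $u$, so $w_4(u)\le d^3-d$; summing over $u$ and applying the trace identity yields $\inj(C_4,G)/n\le d(d-1)^2-d$, so the lemma holds with $\eta:=d$.

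The main obstacle is not arithmetic but identifying the correct local slack. The naive spectral inequality $\operatorname{tr}(A^4)\le d^2\operatorname{tr}(A^2)=nd^3$ is tight precisely for disjoint unions of $K_{d,d}$, but on its own it gives no bound uniform in $|V(G)|$ (e.g.\ two copies of $K_{d,d}$ joined by an edge swap have spectral measure very close to that of $K_{d,d}$). The twin-class refinement above is what upgrades the spectral bound to a genuinely uniform $\eta(d)>0$.
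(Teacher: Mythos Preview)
Your proof is correct and yields a sharper constant ($\eta=d$ rather than the paper's $\eta=1$), but by a genuinely different route. The paper argues by a direct extension count: fix $f(1)$, then pick $f(2),f(4)\in N(f(1))$ and $f(3)\in \bigl(N(f(2))\cap N(f(4))\bigr)\setminus\{f(1)\}$, giving at most $d(d-1)^2$ extensions, with equality only if every pair in $N(f(1))$ are twins---which already forces a $K_{d,d}$ component. You instead pass through $\operatorname{tr}(A^4)$ and localize via the twin-class decomposition of $(A^4)_{uu}$, bounding each diagonal entry by $d^3-d$ once $k_u\le d-2$. Both arguments rest on the same structural fact (maximal local $C_4$ density at a vertex forces a $K_{d,d}$ component), but the paper's version is a two-line count, while yours trades brevity for a better $\eta$ and a clean link to spectral quantities. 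For the purposes of the paper any $\eta>0$ suffices, so neither advantage is essential downstream.
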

\begin{proof}
Let $G$ be a connected $d$-regular graph on $n$ vertices and let
$f$ be an injective homomorphism from $C_4$ to $G$.
Write $V(C_4)=\{1,2,3,4\}$.
There are at most $n$ choices for $f(1)$. Given any choice of $f(1)$ there are at most $d(d-1)^2$ choices for $f(\{2,3,4\})$
with equality if and only if each pair of vertices in the neighborhood of $f(1)$ has $d$ common neighbors, i.e., $G=K_{d,d}$.
It follows that if $G$ is not isomorphic to $K_{d,d}$ then 
\[
\frac{1}{n}\inj(C_4,G)\leq d(d-1)^2-1=\frac{1}{2d}\inj(C_4, K_{d,d})-1\, .\qedhere
\]
\end{proof}

More generally, all girth $g$, $d$-regular \emph{Moore graphs} are strictly $(d,g)$-optimal, including $K_{d,d}$ and the clique $K_{d+1}$ for $g=4,3$ respectively (see Section~\ref{secConclude}). 

The following theorem states that for even girth $g$, a disjoint union of copies of a strictly $(d,g)$-optimal graph will maximize $i_k$ and $m_k$ over $d$-regular graphs of girth at least $g-1$ for small enough $k$. 

\begin{theorem}
\label{thmGirth}
Let $g\geq 4$ be even, let $d\geq 2$, and suppose a strictly $(d,g)$-optimal graph $H$ exists. 
Then there exists $\eps=\eps(d,g)>0$ so that the following holds. 
Let $h = |V(H)|$ and suppose $G$ is a $d$-regular graph of girth at least $g-1$ on $n$ vertices where $ n$ is divisible by $h$. 
Then for $k\leq \eps n$,
\[
i_k(G)\leq i_k (H_n) \text { and } m_k(G)\leq m_k (H_n)  \, ,\]
where $H_n$ consists of $n/h$ copies of $H$. 
Moreover, if $G$ is not isomorphic to $H_n$ then the inequality is strict for $k\ge g$.
\end{theorem}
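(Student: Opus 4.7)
The plan is to apply the canonical ensemble cluster expansion developed in Section~\ref{secCluster} to both $\log i_k(G)$ and $\log i_k(H_n)$, and to compare their expansions term by term after grouping contributions by the induced ``footprint'' subgraph. The first footprints at which the two expansions can differ are $C_{g-1}$ and $C_g$, and both will push the comparison in favor of $H_n$ when $g$ is even.

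\textbf{Setup.} First I would invoke the Koteck\'y--Preiss estimate of Section~\ref{secCluster} to show that for $k\le\eps n$ with $\eps=\eps(d,g)>0$ small enough, both expansions converge absolutely and come with a tail bound on the aggregate contribution of clusters whose support spans many vertices. Because the Mayer weight of a cluster depends only on the isomorphism type of its \emph{footprint} $F$---the connected subgraph of $G$ spanned by the cluster's polymers---and on $k,n$, grouping by footprint gives a representation
\[
\log i_k(G)\;=\;\sum_{F}\inj(F,G)\,w_k(F),
\]
where $F$ ranges over connected graphs and $w_k(F)$ is a universal weight, independent of $G$.

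\textbf{Comparison up to $C_g$.} The next step uses that $G$ and $H_n$ are both $d$-regular of girth $\ge g-1$. For any $F$ with a cycle of length $\le g-2$ we have $\inj(F,G)=0=\inj(F,H_n)$, and for any tree $F$ on at most $g-1$ vertices the two counts both equal $n\,d^{v(F)-1}$ (the girth rules out folding). The smallest footprints whose counts can differ are $C_{g-1}$ and $C_g$. Since $g$ is even, I expect an explicit Ursell computation to give $w_k(C_{g-1})<0$ (odd cycle) and $w_k(C_g)>0$ (even cycle), of magnitudes $\asymp(k/n)^{g-1}$ and $(k/n)^g$ respectively. Combined with $\inj(C_{g-1},H_n)=0$ and $\inj(C_g,G)\le\inj(C_g,H_n)-\eta n$ (the latter from strict $(d,g)$-optimality applied componentwise), both leading contributions to $\log i_k(G)-\log i_k(H_n)$ are nonpositive. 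For $G\not\cong H_n$ and $k\ge g$ at least one is strictly negative of order $n$: either $G$ contains a $C_{g-1}$, or $G$ has girth $\ge g$ and strict $(d,g)$-optimality makes the $C_g$ contribution strictly negative.

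\textbf{Tail, matchings, and main obstacle.} The remaining footprints $F$ with at least $g+1$ edges are controlled, via the Koteck\'y--Preiss tail, by an aggregate bound of the form $O\bigl((C(d,g)\eps)^{g+1}n\bigr)$, which is strictly smaller than the $C_{g-1}$ or $C_g$ gain once $\eps$ is chosen small enough in terms of $d,g,\eta$. This yields the strict inequality for $k\ge g$ and $G\not\cong H_n$, and the nonstrict inequality for $k<g$ (which is in any case immediate from small-subgraph formulas for $i_k$). For matchings I plan to run the same argument on the monomer--dimer cluster expansion, in which polymers are edges and two edges are incompatible iff they share an endpoint; the girth plus $(d,g)$-optimality input transfers identically, with cycles in $G$ playing the same role at the level of edge-footprints. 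The main obstacle is the quantitative Mayer/Ursell step for $C_{g-1}$ and $C_g$: one must pin down the correct signs (which rely on the parity of $g$) together with explicit lower bounds on $|w_k(C_{g-1})|$ and $|w_k(C_g)|$, and then calibrate the Koteck\'y--Preiss tail precisely enough that footprints with more than $g$ edges cannot compensate for the gain.
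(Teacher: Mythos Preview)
Your overall plan matches the paper's, but there is a genuine gap in the quantitative step. Strict $(d,g)$-optimality applied componentwise does \emph{not} give $\inj(C_g,G)\le\inj(C_g,H_n)-\eta n$: it gives only $\inj(C_g,G)\le\inj(C_g,H_n)-\eta\,\alpha n$, where $\alpha n$ is the number of vertices of $G$ lying in components not isomorphic to $H$. If $G$ differs from $H_n$ in a single component then $\alpha=O(1/n)$, and the $C_g$ (or $C_{g-1}$) gain you isolate is of order $(k/n)^g\cdot\alpha n=O(\eps^g)$, a constant independent of $n$. Meanwhile the Koteck\'y--Preiss tail you invoke bounds the \emph{absolute} contribution of large clusters, not the difference between the two expansions; it is of order $\eps^{g+1}n$ and overwhelms the gain for every fixed $\eps>0$ once $n$ is large. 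So the argument as written only goes through when $G$ has no $H$-component.

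The paper repairs this with a case split on $\alpha$. If $\alpha\ge 1/10$, a direct cluster-expansion comparison (essentially your argument, carried out carefully) yields the strong bound $i_k(G)\le\exp\{-c\,k^g n^{1-g}\}\,i_k(H_n)$; this is Lemma~\ref{lemGirthNoH}. If $\alpha<1/10$, one writes $G=G_0\cup G_H$ and $H_n=H_0\cup G_H$ with $G_H$ the common union of $H$-components, expands $i_k(G)=\sum_j i_j(G_0)\,i_{k-j}(G_H)$ and likewise for $H_n$, and feeds the previous case (now with $\alpha=1$ for the pair $(G_0,H_0)$) into a short combinatorial estimate on the convolution. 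Two smaller wrinkles your sketch elides: the cluster expansion is not literally linear in $\inj(F,G)$, since multi-polymer clusters contribute products of homomorphism densities; and among size-$g$ contributions one must also track $C_{g-1}$ with a pendant edge (and, for $g=4$, the diamond and $K_4$), all of which the paper shows contribute $O(1/n)\,t(C_{g-1})$. For matchings the paper simply runs the independent-set argument on $L(G)$ rather than setting up a separate edge-polymer model.
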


\noindent
Theorem~\ref{thmMain} follows from Theorem~\ref{thmGirth}, Lemma~\ref{lemkddopt}, 
and the following result of the current authors and Roberts which shows 
that Conjectures~\ref{conjKahn} and~\ref{conjUM} hold when $n$ is large and $k\ge \eps n$.
 \begin{theorem}[\cite{DaviesCoefficients}]
  \label{thmStability}
  For all $\eps> 0$ and $d \ge 2$, there is $N_1 = N_1(\eps, d)$ large enough so that for all $n \ge N_1$, $n$ divisible by $2d$, all $d$-regular graphs $G$ on $n$ vertices, and all $k \ge \eps n$,
  \begin{align*}
    m_k(G) & \le m_k(H_{d,n}) \quad \text{and} \quad
    i_k(G) \le i_k(H_{d,n}) \, .
  \end{align*}
  Moreover the inequalities are strict if $k\leq n/2$ and $G$ is not isomorphic to $H_{d,n}$.
\end{theorem}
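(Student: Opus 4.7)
The plan is to transfer the known global partition function inequalities to bounds on individual coefficients via a saddle-point identity combined with a stability refinement. The inputs are Zhao's bound $Z_G(\lam)\le Z_{H_{d,n}}(\lam)$ for independent sets and the analogous bound $Z_G^m(\lam)\le Z_{H_{d,n}}^m(\lam)$ for matchings from \cite{davies2015independentns}, both valid for all $\lam\ge 0$ and all $d$-regular $G$ on $n$ vertices. For each $k$ with $\eps n\le k\le n/2$, I would choose $\lam_G(k)$ so that the hard-core distribution on $G$ at activity $\lam_G(k)$ assigns mean independent-set size $k$; then
\[
i_k(G)=\frac{Z_G(\lam_G(k))}{\lam_G(k)^k}\cdot \P[|I|=k],
\]
where $I$ is a sample from the distribution. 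A local central limit theorem, justified by the fact that $|I|$ is a sum of locally-dependent Bernoullis with variance $\Theta(n)$, gives $\P[|I|=k]=(1+o(1))/\sqrt{2\pi\,\var|I|}$. The matching case is analogous via the monomer-dimer model.

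Optimizing Zhao's inequality in $\lam$ gives $\min_\lam Z_G(\lam)/\lam^k\le \min_\lam Z_{H_{d,n}}(\lam)/\lam^k$, which when combined with the saddle-point identity for both $G$ and $H_{d,n}$ reduces the problem to a comparison of the variances of the random set sizes. I would then run a dichotomy on how close $G$ is to $H_{d,n}$ in a local-structure metric (say edit distance, or densities of small subgraphs). In the \emph{near-extremal case}, a second-moment calculation comparing covariances of occupancy indicators on $G$ and $H_{d,n}$ should give $\var|I|$ for $G$ at least $(1-o(1))$ times the corresponding variance for $H_{d,n}$, closing the polynomial gap. In the \emph{far-from-extremal case}, I would aim for a stability strengthening of Zhao's bound (and of the occupancy-method bound for matchings) giving $\log Z_G(\lam)\le \log Z_{H_{d,n}}(\lam)-cn$ for some $c=c(d,\del)>0$; this exponential improvement absorbs any polynomial deficit from the variance ratio and the local CLT errors.

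The main obstacle is the stability step. Zhao's proof lifts the bipartite inequality via a doubling construction that does not obviously produce a quantitative $\Omega(n)$ gap, so one has to revisit either that reduction or the bipartite arguments of Kahn and Galvin--Tetali and track how far $Z_G(\lam)$ falls short of $Z_{H_{d,n}}(\lam)$ when $G$ is $\del$-far from a disjoint union of $K_{d,d}$'s; the matching case requires a similar refinement of the linear-programming/occupancy argument of \cite{davies2015independentns}. A secondary technical point is uniformity in $k\in[\eps n,n/2]$: the local CLT error, the variance comparison, and the stability constant $c$ all need to be uniform in this range (equivalently in the corresponding range of $\lam$) so that the threshold $N_1$ depends only on $\eps$ and $d$. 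The strict inequality for $G\not\cong H_{d,n}$ then follows from the equality case of the global bound combined with the divisibility $2d\mid n$, together with the strict variance or stability gap in each branch of the dichotomy.
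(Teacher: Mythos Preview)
The present paper does not prove this theorem at all: it is quoted from \cite{DaviesCoefficients} and used as a black box in the proof of Theorem~\ref{thmMain}. The only additional comment the paper makes is that the strict-inequality clause, while not stated verbatim in \cite{DaviesCoefficients}, is clear from the proof there. So there is no ``paper's own proof'' to compare your proposal against.

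That said, your sketch is broadly faithful to the strategy of \cite{DaviesCoefficients}: the saddle-point identity $i_k(G)=Z_G(\lam)\lam^{-k}\,\P[|I|=k]$ at the $\lam$ with $\E|I|=k$, a local CLT to control $\P[|I|=k]$ up to a $\Theta(n^{-1/2})$ factor, and a near/far dichotomy in which the far-from-$H_{d,n}$ case is handled by a stability strengthening of the partition-function inequality yielding an $e^{-cn}$ gain that swamps the polynomial losses. You correctly identify the two genuine difficulties, namely proving a quantitative stability bound $\log Z_G(\lam)\le \log Z_{H_{d,n}}(\lam)-c(d,\del)\,n$ in the far case (for both independent sets and matchings) and making all estimates uniform over $k\in[\eps n,\,n/2]$. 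One point to sharpen: in the near-extremal case of \cite{DaviesCoefficients} one does not compare variances of $|I|$ on $G$ versus $H_{d,n}$ directly; rather, one uses that $G$ is a disjoint union of $H_{d,n'}$ with a small ``bad'' part, applies the LCLT only to the $H_{d,n'}$ piece (where the distribution is explicit), and controls the contribution of the bad part combinatorially. Your proposed ``covariance comparison'' is the right intuition but would need to be replaced by this structural decomposition to close the argument.
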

We remark that the theorem statement in \cite{DaviesCoefficients} does not include the final observation regarding strict inequalities. However it is clear from the proof of Theorem~\ref{thmStability} that this holds.

\begin{proof}[Proof of Theorem~\ref{thmMain}]
Let $d\geq 2$ and let $\eps=\eps(d,4)$ be as in Theorem~\ref{thmGirth}.
Let $N=N_1(\eps,d)$ be as in Theorem~\ref{thmStability}. 
Let $G$ be a $d$-regular graph on $n$ vertices where
$n$ is divisible by $2d$ and $n\geq N$.
Moreover suppose that $G$ is not isomorphic to $H_{d,n}$.

By Lemma~\ref{lemkddopt} and Theorem~\ref{thmGirth} with $g=4$ (noting that all graphs have girth at least $g-1=3$),
for $k\leq \eps n$ we have 
\begin{align}\label{eqikmk}
i_k(G)\leq i_k (H_{d,n}) \text { and } m_k(G)\leq m_k (H_{d,n})  \, ,
\end{align}
 where the inequality is strict for $k\geq 4$.
 By Theorem~\ref{thmStability}, 
 the inequalities~\eqref{eqikmk} hold and are strict for $\eps n\le k \le n/2$ also.
\end{proof}

It remains to prove Theorem~\ref{thmGirth}.
Henceforth let $g\geq 4$ be even and let $d\geq 2$ be such that a $(d,g)$-optimal graph $H$ exists. 
Let $h$ denote the number of vertices in the graph $H$, suppose that $h$ divides $n$, and let $H_n$ be a disjoint union of $n/h$ copies of $H$.

The key idea in proving Theorem~\ref{thmGirth} is to show that the number of independent sets (or matchings) of small size (that is, at most $\eps n$) is essentially determined by the density of the cycles $C_{g-1}$ and $C_g$ in $G$, with more copies of $C_{g-1}$ leading to fewer independent sets (or matchings), and more copies of $C_g$ leading to a greater number. Intuitively, even cycles are beneficial to the count whereas odd cycles are harmful. 
To make this rigorous we write $\log i_k(G)$ and $\log m_k(G)$ as an infinite series using the cluster expansions for the canonical ensembles of the hard-core model and the monomer-dimer model. The first step is to rewrite $i_k(G)$, $m_k(G)$ in terms of the partition function of a \emph{polymer model}. 
The derivation of this polymer model is simple and we present a self-contained argument in the next section; see also~\cite{pulvirenti2012cluster,sokal2001bounds,borgs2006absence} for related work.  We will present the argument first for independent sets and then extend this argument to matchings in Section~\ref{secMatchings}.

\section{Cluster expansion in the canonical ensemble}
\label{secCluster}

Given a simple $d$-regular graph $G=(V,E)$ on $n$ vertices, we begin by showing how $i_k(G)$ can be expressed in terms of the partition function of an \emph{abstract polymer model}. 
Let $G^\circ=(V,E^\circ)$ be $G$ with a self-loop added to every vertex.
Then
\begin{align*}
  i_k(G)
   & =\frac{1}{k!}\sum_{\phi: [k]\to V} \prod_{ij\in \binom{[k]}{2}}\mathbf 1_{\phi(i)\phi(j)\notin E^\circ}                           \\
   & =\frac{1}{k!}\sum_{\phi: [k]\to V} \prod_{ij\in \binom{[k]}{2}}(1+\mathbf 1_{\phi(i)\phi(j)\notin E^\circ}-1)                     \\
   & =\frac{1}{k!}\sum_{\phi: [k]\to V} \sum_{F\subseteq \binom{[k]}{2}}\prod_{ij\in F}(\mathbf 1_{\phi(i)\phi(j)\notin E^\circ}-1)\,.
\end{align*}

Let $\Pi$ denote the set of all unordered partitions of the set $[k]$.
For a subset $S\subseteq[k]$, write $\cC_S$ for the set of all connected graphs on vertex set $S$.
By grouping graphs $([k],F)$ according to their component structure we have, for any fixed $\phi:[k]\to V$,
\begin{align*}
  \sum_{F\subseteq \binom{[k]}{2}}\prod_{ij\in F}(\mathbf 1_{\phi(i)\phi(j)\notin E^\circ}-1)
   & = \sum_{\pi\in \Pi} \prod_{S\in \pi} \sum_{F\in \cC_S}\prod_{ij\in E(F)}(\mathbf 1_{\phi(i)\phi(j)\notin E^\circ}-1)\, .
\end{align*}
This allows us to break the sum over $\phi:[k]\to V$ into separate terms $\phi:S\to V$ for each $S\in\pi\in \Pi$. 
For $S\subseteq[k]$, let
\begin{align}\label{eqweightdef}
  w_G(S):=\frac{1}{n^{|S|}}\sum_{\phi: S\to V} \sum_{F\in \cC_S}\prod_{ij\in E(F)}(\mathbf 1_{\phi(i)\phi(j)\notin E^\circ}-1)\, .
\end{align}
We will refer to $w_G(S)$ as the \emph{weight} of $S$ (with respect to $G$).
Then
\begin{align*}
  i_k(G)
   & =\frac{n^k}{k!} \sum_{\pi\in \Pi} \prod_{S\in \pi} w_G(S)\, .
\end{align*}

Let $\cP$ denote the set of all subsets $S\subseteq [k]$ with $|S|\geq 2$.
We call the elements of $\mathcal P$ \emph{polymers}.
We say that two polymers $S_1, S_2$ are \emph{compatible}, and write $S_1\sim S_2$,
if $S_1\cap S_2=\emptyset$.
We note that $w_G(S)$ depends only on the size of $S$, however it will be important to make $S$ explicit in the notation since the notion of compatibility of polymers depends on the polymers themselves.
Let $\Omega$ denote collection of all sets of pairwise compatible polymers.
Since $w_G(S)=1$ whenever $|S|=1$ we have
\begin{align}\label{eqikcanon}
  i_k(G)
   & =\frac{n^k}{k!} \Xi_k(G)\, ,
\end{align}
where
\[
\Xi_k(G) := \sum_{\Gamma\in\Omega} \prod_{S\in \Gamma} w_G(S)\,.
\] 
The set $\cP$ together with the compatibility relation `$\sim$' and
weight function $w_G$ defines a \emph{polymer model}, as defined by Koteck\'{y} and Preiss~\cite{kotecky1986cluster}, generalizing a technique used to study statistical mechanics models on lattices (e.g.~\cite{gruber1971general}). 
The expression $\Xi_k(G)$ is known as the \emph{polymer model partition function}.  

Expressing $i_k(G)$ as a polymer model partition function (scaled by $n^k/k!$) allows
us to use the \emph{cluster expansion},  an infinite series representation of $\log \Xi_k(G)$. 
To introduce the cluster expansion we require some notation.

Suppose that $\Gamma=(S_1, \ldots, S_t)$ is an ordered tuple of polymers. 
We define the \emph{incompatibility graph} $I(\Gamma)$ to be 
the graph on vertex set ${1,\ldots, t}$ where
$i\sim j$ if and only if $i \neq j$ and $S_i$ is incompatible with $S_j$.
A \emph{cluster} is an ordered tuple $\Gamma$ of polymers whose incompatibility graph $I(\Gamma)$ is connected. 
Given a graph $F$, we define the  \emph{Ursell function} $\phi(F)$ of $F$ to be
\begin{align*}
\phi(F) &= \frac{1}{|V(F)|!} \sum_{\substack{A \subseteq E(F)\\ \text{spanning, connected}}}  (-1)^{|A|} \, .
\end{align*}

Let $\cC$ be the set of all clusters. The \emph{cluster expansion} is the formal power series in the weights $w_G(S)$
\begin{align*}
\log \Xi_k(G) &= \sum_{\Gamma \in \cC} w_G(\Gamma) \,,
\end{align*}
where
\begin{align*}
w_G(\Gamma) &=  \phi(I(\Gamma)) \prod_{S \in \Gamma} w_G(S) \,.
\end{align*}

A sufficient condition for the convergence of the cluster expansion is given by a theorem of Koteck\'y and Preiss.
\begin{theorem}[\cite{kotecky1986cluster}]
\label{thmKP}
Let $a : \cP \to [0,\infty)$ and $b: \cP \to [0,\infty)$ be two functions.  If for all polymers $S \in \cP$, 
\begin{align}
\label{eqKPcond}
\sum_{S' \nsim S}  |w(S')| e^{a(S') +b(S')}  &\le a(S) \,,
\end{align}
then the cluster expansion converges absolutely.  Moreover, if we let $b(\Gamma) = \sum_{S \in \Gamma} b(S)$ and write $\Gamma \nsim S$ if there exists $S' \in \Gamma$ so that $S \nsim S'$, then for all polymers $S$,
\begin{align}
\label{eqKPtail}
 \sum_{\Gamma \in \cC, \;  \Gamma \nsim S} \left |  w(\Gamma) \right| e^{b(\Gamma)} \le a(S) \,.
\end{align}
\end{theorem}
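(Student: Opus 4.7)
The plan is to deduce both claims from an \emph{anchored} single-polymer bound: for every polymer $S_0 \in \cP$,
\begin{align*}
\Psi(S_0) := \sum_{\Gamma \in \cC \,:\, S_0 \in \Gamma} |w(\Gamma)| e^{b(\Gamma)} \le |w(S_0)| e^{a(S_0) + b(S_0)}.
\end{align*}
Granted this, the tail bound \eqref{eqKPtail} follows by an overcounting argument: every cluster $\Gamma$ with $\Gamma \nsim S$ contains at least one polymer $S' \nsim S$, so
\begin{align*}
\sum_{\Gamma \nsim S} |w(\Gamma)| e^{b(\Gamma)} \le \sum_{S' \nsim S} \Psi(S') \le \sum_{S' \nsim S} |w(S')| e^{a(S') + b(S')} \le a(S),
\end{align*}
where the last step is the Koteck\'y--Preiss hypothesis \eqref{eqKPcond}. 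Absolute convergence of the cluster expansion follows from the same anchored bound, since it controls $\sum_{\Gamma \ni S_0} |w(\Gamma)|$ for every $S_0$ and every cluster contains at least one polymer.

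To prove the anchored bound I would invoke the Penrose tree-graph inequality
\begin{align*}
|\phi(F)| \le \frac{T(F)}{|V(F)|!}
\end{align*}
for every connected graph $F$, where $T(F)$ denotes the number of spanning trees of $F$. Applied to $F = I(\Gamma)$, it replaces the alternating sum defining $\phi$ by a nonnegative sum over pairs (ordered cluster $\Gamma$, spanning tree of $I(\Gamma)$). Rooting each such spanning tree at the copy of $S_0$ in the tuple recasts $\Psi(S_0)$ as $|w(S_0)| e^{b(S_0)}$ times a generating sum $\mathcal{U}(S_0)$ over rooted labeled trees whose root is labeled $S_0$, whose other vertices are labeled by polymers, whose edges enforce incompatibility between parent and child, and with each non-root vertex $S$ contributing weight $|w(S)| e^{b(S)}$.

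The heart of the argument is a recursion on $\mathcal{U}$: peeling off the (unordered) children of the root and applying the inductive hypothesis to each subtree gives
\begin{align*}
\mathcal{U}(S_0) \le \sum_{k \ge 0} \frac{1}{k!} \Big( \sum_{S' \nsim S_0} |w(S')| e^{a(S') + b(S')} \Big)^k \le \exp(a(S_0)),
\end{align*}
where the $1/k!$ absorbs the Ursell normalization $1/|V(F)|!$ via the exponential-formula bookkeeping for labeled rooted trees, and the bracket is bounded by $a(S_0)$ by \eqref{eqKPcond}. Combining with the prefactor yields the anchored bound.

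The main obstacle is precisely this bookkeeping: three sources of symmetry must be reconciled cleanly (clusters are ordered tuples, $\phi$ carries $1/|V(F)|!$, and a spanning tree of $I(\Gamma)$ can be rooted in $|V(F)|$ ways) so that the exponential generating function manipulation fires without spurious factorials. The Penrose tree-graph inequality itself is a delicate but classical combinatorial fact, which I would cite (cf.\ \cite{kotecky1986cluster} or Scott--Sokal \cite{sokal2001bounds}) rather than reprove.
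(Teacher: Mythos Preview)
The paper does not prove Theorem~\ref{thmKP} at all: it is stated as a quotation from Koteck\'y--Preiss~\cite{kotecky1986cluster} and used as a black box (see the proof of Lemma~\ref{lemConverge}, where~\eqref{eqKPcond} is verified and~\eqref{eqKPtail} is applied). So there is no proof in the paper against which to compare your proposal.

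That said, your sketch is a faithful outline of the standard modern proof (as in, e.g., Ueltschi or Friedli--Velenik): bound the Ursell function by the tree-graph inequality, rewrite the anchored sum as a generating function for rooted labeled trees with incompatibility constraints along edges, and close the recursion using~\eqref{eqKPcond}. Your deduction of~\eqref{eqKPtail} from the anchored bound via overcounting is correct. The one place to be careful, which you flag yourself, is the bookkeeping between ordered tuples $\Gamma$, the $1/|V(I(\Gamma))|!$ inside $\phi$, and the choice of root; the cleanest way to organise this is to fix the root polymer in the first coordinate of the tuple and absorb the remaining $(t-1)!$ orderings into the labeled-tree count, at which point the recursion is literally the exponential generating function identity for rooted labeled forests. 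None of this is new, and citing~\cite{kotecky1986cluster} (or a textbook treatment) is entirely appropriate here, which is exactly what the paper does.
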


The main result of this section is that  for small values of $k$, as in $k\leq \eps n$ for some explicit $\eps=\eps(d)$, the cluster expansion of $\log \Xi_k(G)$ converges.  

For a cluster $\Gamma$, we let $|\Gamma|$ denote the number of polymers (with multiplicity) in $\Gamma$ and let $\| \Gamma \|$ denote the sum of the sizes of these polymers.

\begin{lemma}
\label{lemConverge}
  For every $d$-regular graph $G$ on $n$ vertices, and all $k \le e^{-5}  n/(d+1)$, the cluster expansion for $\log \Xi_k (G)$ converges absolutely.  Moreover, we have the following tail bound on the cluster expansion.
\begin{equation}
\sum_{\substack{\Gamma \in \cC \\ \| \Gamma \| - |\Gamma| \ge t}}  | w_G(\Gamma)| \le k \gamma^{t}
\end{equation}
where $\gamma =(d+1)e^5 k/n\le 1$.
\end{lemma}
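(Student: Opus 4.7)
The plan is to apply the Kotecký--Preiss theorem (Theorem~\ref{thmKP}) with $a(S) = |S|$ and $b(S) = \beta(|S|-1)$, where $\beta := -\log\gamma \ge 0$. With these choices $e^{b(\Gamma)} = \gamma^{-(\|\Gamma\|-|\Gamma|)}$, so a finite bound on $\sum_\Gamma |w_G(\Gamma)|e^{b(\Gamma)}$ immediately yields both absolute convergence (since $\gamma \le 1$ makes $e^{b(\Gamma)} \ge 1$) and the claimed tail bound via the standard rearrangement: $\|\Gamma\|-|\Gamma| \ge t$ together with $\gamma \le 1$ imply $|w_G(\Gamma)| = |w_G(\Gamma)| e^{b(\Gamma)} \gamma^{\|\Gamma\|-|\Gamma|} \le |w_G(\Gamma)| e^{b(\Gamma)} \gamma^t$.

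First I would bound the polymer weights $|w_G(S)|$ for $|S| = s$. Observing that $\mathbf{1}_{\phi(i)\phi(j)\notin E^\circ}-1 = -\mathbf{1}_{\phi(i)\phi(j)\in E^\circ}$ and letting $B(\phi) := \{\{i,j\}\in\binom{S}{2}: \phi(i)\phi(j)\in E^\circ\}$, the inner sum in~\eqref{eqweightdef} becomes $s!$ times the Ursell function of $([s], B(\phi))$. I would then apply Penrose's tree-graph inequality, swap the order of summation over $\phi$ and spanning trees of $[s]$ contained in $B(\phi)$, and use that $G^\circ$ has maximum degree $d+1$ together with Cayley's formula for the $s^{s-2}$ labeled spanning trees on $[s]$ to conclude
\[ |w_G(S)| \le \frac{s^{s-2}(d+1)^{s-1}}{n^{s-1}}. \]

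Next I would verify~\eqref{eqKPcond}. Since $\sum_{S' \not\sim S}(\cdot) \le \sum_{i \in S}\sum_{S' \ni i}(\cdot)$, it suffices to show $\sum_{S' \ni i}|w_G(S')|e^{a(S')+b(S')} \le 1$ for each $i \in [k]$. Using the weight bound above together with $\binom{k-1}{s-1} \le (k-1)^{s-1}/(s-1)!$ and the identity $\sum_{m \ge 1}(m+1)^{m-1}\lambda^m/m! = e^{\Psi(\lambda)}-1$, the inner sum works out to at most $e(e^{\Psi(\lambda)}-1)$, where $\Psi$ is the tree function satisfying $\Psi(\lambda) = \lambda e^{\Psi(\lambda)}$ and $\lambda = (k-1)(d+1)e^{1+\beta}/n$. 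The choice of $\beta$ and the hypothesis $k \le e^{-5}n/(d+1)$ together give $\lambda \le e^{-4}$, and hence $\Psi(\lambda) \le e^{-3}$, so $e(e^{\Psi(\lambda)}-1) < 1$.

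To pass from the KP tail bound~\eqref{eqKPtail} at specific polymers to a bound on $\sum_\Gamma|w_G(\Gamma)|e^{b(\Gamma)}$, I would augment the polymer model by formal singletons $\{i\}$ of weight zero (which leave $\Xi_k$ and every nonzero cluster weight unchanged) with $a(\{i\}) = 1$ and $b(\{i\}) = 0$. The KP condition at such $S$ reduces to the inequality already verified, and~\eqref{eqKPtail} then yields $\sum_{\Gamma:\,i \in V(\Gamma)} |w_G(\Gamma)|e^{b(\Gamma)} \le 1$; summing over $i \in [k]$ and using $|V(\Gamma)| \ge 1$ gives $\sum_\Gamma |w_G(\Gamma)|e^{b(\Gamma)} \le k$, which is precisely what is needed. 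The main technical obstacle is the weight bound on $|w_G(S)|$: the sum over connected spanning subgraphs in $\cC_S$ is exponentially large in $s$, so one must extract the cancellation via the tree-graph inequality to reduce to the $s^{s-2}$ spanning trees of $[s]$; everything else is careful calibration, and the constant $e^{-5}$ in the hypothesis is chosen precisely to place $\lambda$ in the region of comfortable convergence.
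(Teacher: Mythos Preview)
Your proof is correct and follows essentially the same approach as the paper: the same choices $a(S)=|S|$ and $b(S)=(-\log\gamma)(|S|-1)$, the same Penrose--Cayley weight bound $|w_G(S)|\le |S|^{|S|-2}((d+1)/n)^{|S|-1}$, and the same Kotecký--Preiss verification. The only cosmetic differences are that the paper bounds the KP sum with cruder elementary estimates rather than the tree-function identity, and obtains the global tail bound by applying~\eqref{eqKPtail} directly to the polymer $S=[k]$ (which is already in $\cP$, is incompatible with every polymer, and has $a([k])=k$) rather than augmenting with weight-zero singletons.
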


We begin with a standard argument to upper bound the absolute value of the weight $w_G(S)$ of a polymer $S$.
Throughout this section we denote the weight function $w_G$ simply by $w$.
Recall that for a set $S$, we let $\cC_S$ denote the set of all connected graphs with vertex set $S$.
Let us also define $\cT_S$ to be the set of all spanning trees on vertex set $S$.
We appeal to the following special case of an inequality due to Penrose \cite{penrose1967convergence}
(see also \cite[Proposition 4.1]{sokal2001bounds}).  

\begin{lemma}[Penrose inequality \cite{penrose1967convergence}]\label{lempen}
Let $S$ be a finite set and for each $e\in \binom{S}{2}$ let $w_e$ be a complex number such that $|1+w_e|\leq 1$. Then 
\[
\left |\sum_{F\in \cC_S}\prod_{e\in E(F)}w_e \right|\leq \sum_{F\in \cT_S}\prod_{e\in E(F)}|w_e|\, . 
\]
\end{lemma}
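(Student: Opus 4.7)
The plan is to upgrade the claimed inequality to an exact identity via the classical Penrose tree-partition scheme, and then deduce the bound by taking absolute values. First, I would fix an arbitrary total ordering $\prec$ on $S$ with minimum element $r$, and define a ``Penrose map'' $P\colon\cC_S\to\cT_S$ as follows. Given $F\in\cC_S$, compute the graph distance $d_F(v)$ from $r$ to each $v\in S$; then let $P(F)$ be the spanning tree that assigns to each $v\ne r$ the parent edge $uv\in E(F)$ with $d_F(u)=d_F(v)-1$ and $u$ minimum under $\prec$. By construction $P(F)$ is a BFS tree rooted at $r$, so $d_{P(F)}(v)=d_F(v)$ for all $v$.

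The key structural claim is that for each $T\in\cT_S$, the preimage $P^{-1}(T)$ is an interval of the form $\{T\cup E'\colon E'\subseteq M(T)\}$ for a specific set $M(T)$ of admissible extra edges. Writing $p_T(v)$ for the parent of $v$ in $T$, the set $M(T)$ consists of those $uv\notin E(T)$ with $|d_T(u)-d_T(v)|\le 1$, together with the asymmetric restriction $p_T(v)\prec u$ whenever $d_T(v)=d_T(u)+1$. The verification has two parts: (a) adding any $E'\subseteq M(T)$ to $T$ preserves all BFS distances from $r$, since edges of $M(T)$ span at most one level and hence create no shortcut, and it leaves the canonical minimum-parent choice unchanged at every vertex by the definition of $M(T)$; (b) conversely, any $F\in P^{-1}(T)$ contains $T$, and any edge of $F\setminus T$ must lie in $M(T)$, else it would either shorten some distance or replace $p_T(v)$ by a smaller candidate, contradicting $P(F)=T$.

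With this partition the sum over connected graphs regroups into a sum over spanning trees:
\[
\sum_{F\in \cC_S}\prod_{e\in E(F)}w_e \;=\; \sum_{T\in\cT_S}\prod_{e\in E(T)}w_e\prod_{e\in M(T)}(1+w_e),
\]
where factoring $\prod_{e\in T\cup E'}w_e=\prod_{e\in T}w_e\prod_{e\in E'}w_e$ and summing over $E'\subseteq M(T)$ yields $\prod_{e\in M(T)}(1+w_e)$. Applying the triangle inequality on the right and using the hypothesis $|1+w_e|\le 1$ delivers
\[
\left|\sum_{F\in \cC_S}\prod_{e\in E(F)}w_e\right|
\;\le\; \sum_{T\in\cT_S}\prod_{e\in E(T)}|w_e|\prod_{e\in M(T)}|1+w_e|
\;\le\; \sum_{T\in\cT_S}\prod_{e\in E(T)}|w_e|,
\]
which is the claim.

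The main obstacle is the interval property of step two: one must argue carefully that $P^{-1}(T)$ is closed under taking subsets of the extra edges. The asymmetry between same-level edges, which always lie in $M(T)$, and cross-level edges, which lie in $M(T)$ only when they cannot improve the canonical parent choice, is precisely what makes $P^{-1}(T)$ an interval in the Boolean lattice of supergraphs of $T$. Once this structural lemma is established, the telescoping identity and the final bound are routine.
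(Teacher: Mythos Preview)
The paper does not give its own proof of this lemma; it simply cites it as a known special case of Penrose's inequality (with a pointer to Sokal's exposition). So there is nothing to compare against on the paper's side.

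Your argument is the standard Penrose partition scheme and is correct. The two points that deserve care are exactly the ones you flag. First, $P(T)=T$ for every spanning tree $T$ (in a tree each $v\ne r$ has a unique neighbor at the previous level), so $P$ is surjective and every $T\in\cT_S$ contributes. Second, the interval property: for $T\subseteq F\subseteq T\cup M(T)$ the added edges span at most one BFS level and hence cannot shorten any root distance, and the cross-level edges in $M(T)$ are precisely those that cannot displace the canonical parent, so $P(F)=T$; conversely any edge of $F\setminus T$ with $F\in P^{-1}(T)$ must lie in $M(T)$ for the same reasons. Once the identity
\[
\sum_{F\in\cC_S}\prod_{e\in E(F)}w_e=\sum_{T\in\cT_S}\prod_{e\in E(T)}w_e\prod_{e\in M(T)}(1+w_e)
\]
is in hand, the bound is immediate from $|1+w_e|\le 1$. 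One cosmetic remark: your description of $M(T)$ should impose the parent condition symmetrically (if $d_T(u)=d_T(v)+1$ then $p_T(u)\prec v$ as well), but this is clearly the intended meaning.
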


For fixed $S\subseteq [k]$ and $\phi: S\to V$, applying Lemma~\ref{lempen} with $w_{ij}=1_{\phi(i)\phi(j)\notin E^\circ}-1$
yields
\[
  \left| \sum_{F\in \cC_S}\prod_{ij\in E(F)}(\mathbf 1_{\phi(i)\phi(j)\notin E^\circ}-1)\right|
  \leq \sum_{F\in \cT_S}\prod_{ij\in E(F)}\mathbf 1_{\phi(i)\phi(j)\in E^\circ}\, .
\]
It follows that 
\begin{align*}
  |w(S)| & \leq \frac{1}{n^{|S|}}\sum_{\phi: S\to V} \left|\sum_{F\in \cC_S}\prod_{ij\in E(F)}(\mathbf 1_{\phi(i)\phi(j)\notin E^\circ}-1)\right| \\
        & \leq \frac{1}{n^{|S|}}\sum_{\phi: S\to V} \sum_{F\in \cT_S} \prod_{ij\in E(F)}\mathbf 1_{\phi(i)\phi(j)\in E^\circ}                 \\
        & \leq \sum_{F\in \cT_S}t(F,G^\circ)\,,
\end{align*}
where we write
\[ t(F,G^\circ) := n^{-|V(F)|}\sum_{\phi:V(F)\to V(G^\circ)}\prod_{ij\in E(F)}\mathbf 1_{\phi(i)\phi(j)\in E(G^\circ)} \]
for the standard notion of \emph{homomorphism density} of $F$ in $G^\circ$.

\begin{lemma}\label{lem:homcounts2}
  Let $G$ be a simple $d$-regular graph on $n$ vertices, and $G^\circ$ be formed from $G$ by adding a self-loop at every vertex. Then for any tree $F$,
  \[ t(F, G^\circ)  = \left(\frac{d+1}{n}\right)^{|V(F)|-1}\,. \]
\end{lemma}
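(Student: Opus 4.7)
The plan is a direct computation by rooting the tree and iterating outward along it. Fix an arbitrary root $r \in V(F)$ and order the remaining vertices $v_1, v_2, \dots, v_{|V(F)|-1}$ so that each $v_i$ has a unique parent $p(v_i)$ appearing earlier in the order (a BFS or DFS order relative to $r$). Since $F$ is a tree, its edge set is exactly $\{v_i\, p(v_i) : i \ge 1\}$.

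With this ordering the defining sum factorizes: first choose $\phi(r) \in V$, then for each $i=1,\dots,|V(F)|-1$ choose $\phi(v_i) \in V$ subject only to the single constraint $\phi(v_i)\phi(p(v_i)) \in E(G^\circ)$. The key observation is that in $G^\circ$ every vertex $u \in V$ has exactly $d+1$ vertices $w$ with $uw \in E(G^\circ)$: the $d$ neighbors of $u$ in $G$ together with $u$ itself via the self-loop. Consequently, for any fixed image of the parent, there are exactly $d+1$ valid images of $v_i$, independently of all the other choices already made.

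Multiplying out, the number of homomorphisms $\phi : V(F) \to V(G^\circ)$ (respecting the incidence relation of $G^\circ$, which is what the indicator product counts) is $n \cdot (d+1)^{|V(F)|-1}$. Dividing by $n^{|V(F)|}$ yields the claimed identity
\[ t(F,G^\circ) = \left(\frac{d+1}{n}\right)^{|V(F)|-1}. \]

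There is no real obstacle here: the argument relies only on the tree structure (so the rooted ordering makes the constraints independent) and on $d$-regularity of $G$ together with the self-loop at every vertex of $G^\circ$, making $G^\circ$ effectively $(d+1)$-regular for this counting purpose. The only thing to be slightly careful about is the convention that a self-loop at $u$ contributes the single edge $uu \in E(G^\circ)$, so that $\phi(v_i) = \phi(p(v_i))$ is a valid assignment contributing one to the count of valid extensions.
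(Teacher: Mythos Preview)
Your proof is correct and is essentially the same as the paper's: the paper proceeds by induction on $|V(F)|$, removing a leaf and noting it has exactly $d+1$ extensions, which is just the recursive form of your rooted vertex-by-vertex count.
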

\begin{proof}
The proof is a simple induction on $|V(F)|$. The case $|V(F)|=1$ holds trivially. 
Suppose that $F$ is a tree on at least $2$ vertices and let $x$ be a leaf of $F$.
By the induction hypothesis 
\[ t(F-x, G^\circ)= \left(\frac{d+1}{n}\right)^{|V(F)|-2}\,. \]
The result follows by noting that any homomorphism from $F-x$ to $G^\circ$ can be 
extended to a homomorphism from $F$ to $G$ in precisely $d+1$ ways. 
\end{proof}

Now we can prove Lemma~\ref{lemConverge}.

\begin{proof}[Proof of Lemma~\ref{lemConverge}]

Applying Lemma~\ref{lem:homcounts2} together with Cayley's formula, the well-known fact that $|\cT_S|=|S|^{|S|-2}$, we obtain
\begin{equation}\label{eq:weightbound}
  |w(S)|\leq |S|^{|S|-2}\left(\frac{d+1}{n}\right)^{|S|-1}\,.
\end{equation}

Given this bound, we can now verify the Koteck\'y--Preiss condition~\eqref{eqKPcond} 
with $a(S) = |S|$ and $b(S) = K ( |S| -1)$
where $K = \log \frac{ n   }{ (d+1) e^5k  }  \ge0$ .
We want to show that for all $S'\in \cP$
\[
  \sum_{S\not\sim S'}|w(S)|e^{(K+1)|S| -K} \leq |S'|\, ,
\]
and hence it suffices to show that for all $v\in [k]$,
\[
  \sum_{S\ni v}|w(S)|e^{(K+1)|S| -K} \leq 1\, .
\]
Now the weight bound~\eqref{eq:weightbound} gives
\begin{align*}
  \sum_{S\ni v}|w(S)|e^{(K+1)|S| -K}
   & \leq \sum_{j=2}^k \binom{k}{j-1}j^{j-2}\left(\frac{d+1}{n}\right)^{j-1}e^{(K+1)j-K}                    \\
   & \leq \sum_{j=2}^k \left(\frac{ek}{j-1}\right)^{j-1}j^{j-2}\left(\frac{d+1}{n}\right)^{j-1}e^{(K+1)j-K} \\
   & \leq e^{2} \sum_{j=2}^\infty \left(\frac{e^{(K+2)}k(d+1)}{n}\right)^{j-1}                              \\
   &= e^2\sum_{j=1}^\infty  e^{-3j} < 1 \,.
\end{align*}
Theorem~\ref{thmKP} then tells us that the cluster expansion converges absolutely, and applying~\eqref{eqKPtail} to the polymer $S=[k]$ which is incompatible with every $S'\in\cP$ gives
\begin{align*}
\sum_{\Gamma \in \cC}  |w_G(\Gamma)|  e^{K(\|\Gamma\| - |\Gamma|)} \le k
\end{align*}
which implies 
\begin{equation*}
\sum_{\substack{\Gamma \in \cC \\ \| \Gamma \| - |\Gamma| \ge t}}  | w_G(\Gamma)| \le k \gamma^{t}
\end{equation*}
where $\gamma =e^{-K} = (d+1) e^5 k/n$.
\end{proof}

\section{Proof of Theorem~\ref{thmGirth} for independent sets}
\label{secGirth}

We will prove Theorem~\ref{thmGirth} in several steps.    Throughout this section we assume that $H$ is strictly $(d,g)$-optimal on $h$ vertices,  $G$ is a $d$-regular graph on $n$ vertices of girth at least $g-1$, and $H_n$ is the union of $n/h$ copies of $H$.  It will be useful to introduce the following notation.
For a graph $F$, let
\[
t(F):= t(F, H_n^\circ)- t(F, G^\circ)\, .
\]
Lemma~\ref{lem:homcounts2} tells us that $t(F)= 0$ for any tree $F$, and since $G$ and $H_n$ have girth at least $g-1$ we have $t(F) =0$ for any $F$ on at most $g-2$ vertices.  
We also have $t(C_{g-1})\le 0$ which follows from the fact that $H_n$ has girth at least $g$ whereas $G$ is permitted to have girth $g-1$.

First we will deal with the relatively simple case of $k \le  g-1$. 
In this case,
\begin{align*}
i_k(H_n)-i_k(G)&=\frac{n^k}{k!}\left(\Xi_k(H_n)- \Xi_k(G) \right)\\
&= \frac{n^k}{k!}\sum_{\Gamma \in \Omega} \prod_{S\in \Gamma}\sum_{F\in \cC_S}(-1)^{|E(F)|}t(F)\\
&= - \frac{n^k}{k!} t(C_{g-1})\mathbf 1_{k=g-1}\geq 0.
\end{align*}
We henceforth assume that $k\geq g$.  We will write $G = G_0 \cup G_H$ where $G_H$ is a union of copies of $H$ and $G_0$ has no component isomorphic to $H$.  We let $\alpha = |G_0|/n$ and can also write $H_n = H_0 \cup G_H$ where $H_0$ is the graph on $\alpha n$ vertices consisting of a union of copies of $H$. 

We first consider the case $\alpha \ge 1/10$; in this case there is a significant gap in the $C_g$ density of $G$ and $H_n$ which will be enough to prove the result via the canonical ensemble cluster expansion of the previous section.

\begin{lemma}
\label{lemGirthGap}
There exists $\delta=\delta(d,g)$ so that  if $\alpha \ge 1/10$,
\[
t(C_g)\geq \delta n^{1-g} + \frac{g}{n}t(C_{g-1}) \,.
\]
\end{lemma}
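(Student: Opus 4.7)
The plan is to use the spectral identity $\hom(C_j, X^\circ) = \operatorname{tr}((A_X + I)^j)$ to express $\Phi(X) := \hom(C_g, X^\circ) - g\hom(C_{g-1}, X^\circ)$ so that only the $g$-cycle count survives, then apply strict $(d,g)$-optimality of $H$.

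Because $C_g$ and $C_{g-1}$ are connected and $G, H_n$ agree on the $G_H$ part, we have
\[
t(C_g) - \frac{g}{n}t(C_{g-1}) = \frac{1}{n^g}\bigl[\Phi(H_0) - \Phi(G_0)\bigr],
\]
so it suffices to show $\Phi(H_0) - \Phi(G_0) \ge \delta n$, where $H_0, G_0$ are $d$-regular on $m = \alpha n$ vertices. Since
\[
(A+I)^g - g(A+I)^{g-1} = (A+I)^{g-1}\bigl(A - (g-1)I\bigr),
\]
one may write $\Phi(X) = \sum_{k=0}^{g}a_k\, c_k(X)$, with $c_k(X) := \operatorname{tr}(A_X^k)$ the number of closed walks of length $k$ in $X$, $a_g = 1$, and crucially $a_{g-1} = \binom{g-1}{g-2} - (g-1)\binom{g-1}{g-1} = 0$.

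The key structural fact is that for a $d$-regular graph $X$ of girth at least $g-1$ with $g$ even: (a) for $k \le g-2$, every closed walk of length $k$ is null-homotopic (it cannot wrap any cycle, since all cycles have length $\ge g-1 > k$), so $c_k(X) = m\tau_k(d)$ depends only on $k, m, d$; and (b) $c_g(X) = m\tau_g(d) + 2g \cdot \#C_g(X)$. Item (b) follows by parity: a non-null-homotopic closed walk of length $g$ wraps some cycle of length $c \ge g-1$ with $c + 2\ell = g$ for some $\ell \ge 0$, and the only solution is $c = g, \ell = 0$, giving exactly $2g$ walks per $g$-cycle. Combining with $a_{g-1} = 0$ and the cancellation of the $a_k c_k$ terms for $k \le g-2$ (which are the same for $H_0$ and $G_0$), we obtain $\Phi(H_0) - \Phi(G_0) = 2g[\#C_g(H_0) - \#C_g(G_0)]$.

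Strict $(d,g)$-optimality applied to each connected component $C$ of $G_0$ (which is $d$-regular, of girth at least $g-1$, and not isomorphic to $H$) gives $\#C_g(C)/v(C) \le \#C_g(H)/h - \eta/(2g)$ via $\inj(C_g, X) = 2g\#C_g(X)$. Summing over components yields $\#C_g(H_0) - \#C_g(G_0) \ge m\eta/(2g)$, and using $\alpha \ge 1/10$ produces $\Phi(H_0) - \Phi(G_0) \ge n\eta/10$, so $\delta = \eta/10$ works. The main obstacle is rigorously establishing (a) and (b): one must verify via the universal cover that null-homotopic closed walks in a $d$-regular graph are in bijection with closed walks at the root of the $d$-regular tree, and that non-null-homotopic closed walks of length $\le g$ are parametrized by cycle traversals in exactly the stated way.
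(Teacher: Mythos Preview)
Your argument is correct and arrives at the same constant $\delta=\eta/10$ as the paper, but the route is genuinely different. The paper works combinatorially: it expands $\hom(C_g,G^\circ)=\sum_\pi \inj(C_g/\pi,G)$ over partitions of $V(C_g)$, observes that in a graph of girth at least $g-1$ the only quotients with nonzero $\inj$ are trees (whose counts depend only on $n,d$ via iterated leaf-deletion), $C_{g-1}$ (arising from the $g$ single-edge contractions), and $C_g$ itself; this gives $n^g t(C_g)=\inj(C_g,H_n)-\inj(C_g,G)-g\,\inj(C_{g-1},G)$ directly, and a parallel computation yields $n^{g-1}t(C_{g-1})=-\inj(C_{g-1},G)$. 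You instead use the spectral identity $\hom(C_j,X^\circ)=\operatorname{tr}((A_X+I)^j)$, expand $\Phi(X)$ as $\sum_k a_k\operatorname{tr}(A_X^k)$, and exploit the pleasant cancellation $a_{g-1}=0$ together with universal-cover counting to isolate the $g$-cycle contribution. The paper's approach is more self-contained and elementary (no covering spaces, no appeal to the $d$-regular tree); yours is more structural and makes transparent \emph{why} the linear combination $t(C_g)-\tfrac{g}{n}t(C_{g-1})$ is the right quantity, namely because it kills the $A^{g-1}$ term so that odd-cycle contributions to $c_{g-1}$ never enter. Your flagged ``main obstacle'' (the bijection between null-homotopic closed walks and closed walks in $T_d$, and the parity argument forcing non-null-homotopic length-$g$ walks to be non-backtracking $g$-cycle traversals) is standard and you have sketched it adequately; filling it in is routine.
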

\begin{proof}
For a graph $F$ and unordered partition $\pi$ of $V(F)$, we denote by $F/\pi$,
the graph obtained by identifying nodes that belong to the same part of $\pi$ and then deleting loops and multiple edges.
We have the following relation:
\begin{align}\label{eqcontract}
\hom(F, G^\circ)= \sum_{\pi\in\Pi}\inj(F/\pi, G)\, ,
\end{align}
 where the sum ranges over the unordered partitions $\Pi$ of $V(F)$.
 Letting $F=C_g$, and noting that $G$ has girth at least $g-1$,
 we have that $\inj(F/\pi, G)\neq 0$ only if 
 $F/\pi$ is a tree or a cycle of length $g$ or $g-1$.
 If $F/\pi=T$ is a tree then it must have at most $g/2$ edges
 (indeed if $U$, $W$ are sets in the partition $\pi$ corresponding to an edge of $T$, then $U,W$ must be connected by at least two edges of $F$, else $F=C_g$ could be disconnected by the removal of one edge).
 If $\{x,y\}$ is an edge of $T$ where $x$ is a leaf, then
 since $G$ has girth at least $g-1$ we have
 \[
 \inj(T, G)=\inj(T-x, G)(d-d_{T-x}(y))\, ,
 \]
where $d_{T-x}(y)$ denotes the degree of $y$ in the graph $T-x$.
Iterating the above, it follows that $\inj(T, G)$ depends only on $d$, $n$ and $T$.
We therefore have
\[
\hom(C_g, G^\circ)= \inj(C_g, G)+ g\inj(C_{g-1}, G) + f(n,d,g)\, ,
\]
where $f$ is a function only of $n,d$ and $g$. The second term appears with coefficient $g$ as contracting an edge of $C_g$ is the only way for $F/\pi$ to equal $C_{g-1}$. 
Similarly, noting that $\inj(C_{g-1}, H_n)=0$, we have
\[
\hom(C_g, H_n^\circ)= \inj(C_g, H_n) + f(n,d,g)\, ,
\]
for the same function $f$ and so 
 \begin{align}\label{eqtinj}
 t(C_g)=n^{-g}\bigl[ \inj(C_g, H_n)- \inj(C_g, G)-g\inj(C_{g-1}, G) \bigr]\, .
 \end{align}
  
  Let $G_1,\dotsc,G_r$ be the connected components of $G$. 
  We have that $\inj(C_g,G)=\sum_{j=1}^r\inj(C_g,G_j)$ because $C_g$ is connected. 
  When $G_j$ is not isomorphic to $H$ we have 
  \[ \inj(C_g,G_j) \le v(G_j)\left(\frac{1}{h}\inj(C_g,H) -\eta\right) \]
  by the strict $(d,g)$-optimality of $H$. 
 This gives 
  \begin{align}\label{eqCggap}
   \inj(C_g,G) \le \frac{n}{h}\inj(C_g,H) - \alpha n \eta \le \inj(C_g,H_n) -\frac{n}{10}\eta \,.
 \end{align}
Letting $\delta=\eta/10$, it follows from~\eqref{eqtinj} that
\[
t(C_g)\geq \delta n^{1-g}-g\inj(C_{g-1}, G)n^{-g} \,.
\]
The argument used to derive \eqref{eqtinj} shows also that 
\[
t(C_{g-1})=-n^{1-g}\inj(C_{g-1}, G)\, .
\]
The result follows. 
\end{proof}

We now deduce the case $\alpha \ge 1/10$ of Theorem~\ref{thmGirth} in a strong form.
\begin{lemma}
\label{lemGirthNoH}
There exist $\eps_1=\eps_1(d,g)>0$ and $c=c(d,g)>0$ so that if $\alpha \ge 1/10$ and  $g\le k\le \eps_1 n$, then
\[
i_k(G)\leq \exp\{-c k^g n^{1-g}\} i_k (H_n)\, .
\]
\end{lemma}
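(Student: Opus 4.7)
The plan is to apply the convergent cluster expansions of Lemma~\ref{lemConverge} to both $G$ and $H_n$, isolate the leading contributions (coming from a short list of small cycle types), and bound everything else via the Koteck\'y--Preiss tail estimate. Since $i_k(G)=\frac{n^k}{k!}\Xi_k(G)$ and similarly for $H_n$, it suffices to prove $\log\Xi_k(H_n)-\log\Xi_k(G)\ge c k^g n^{1-g}$. Writing
\[
\log\Xi_k(H_n)-\log\Xi_k(G)=\sum_{\Gamma\in\cC}\bigl[w_{H_n}(\Gamma)-w_G(\Gamma)\bigr],
\]
the task reduces to identifying which clusters contribute at leading order.

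Using $w_G(S)=\sum_{F\in\cC_S}(-1)^{|E(F)|}t(F,G^\circ)$ (and the analogue for $H_n$) together with the contraction identity $\hom(F,G^\circ)=\sum_{\pi}\inj(F/\pi,G)$ and Lemma~\ref{lem:homcounts2}, the girth hypothesis forces $w_{H_n}(S)=w_G(S)$ whenever $|S|\le g-2$: any non-tree $F\in\cC_S$ has shortest cycle of length $\le g-2$, and each contraction $F/\pi$ is either a tree (same count in $G$ and $H_n$) or still has a cycle of length $\le g-2$ (zero count in both). The first nontrivial single-polymer contribution is at $|S|=g-1$, where only $F=C_{g-1}$ survives, yielding the non-negative quantity
\[
A=\binom{k}{g-1}\frac{(g-2)!}{2}\,n^{1-g}\inj(C_{g-1},G).
\]
The next is the $|S|=g$, $F=C_g$ single-polymer cluster, contributing $B=\binom{k}{g}\frac{(g-1)!}{2}\,t(C_g)$. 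Plugging the bound $t(C_g)\ge \delta n^{1-g}+(g/n)t(C_{g-1})$ of Lemma~\ref{lemGirthGap} (which uses $\alpha\ge 1/10$) into $B$, the negative $(g/n)t(C_{g-1})$ piece turns out to be precisely absorbable into $A$, giving
\[
A+B\ \ge\ \binom{k}{g}\frac{(g-1)!}{2}\,\delta n^{1-g}\ \ge\ c_1 k^g n^{1-g}
\]
for some $c_1=c_1(d,g)>0$ and all $k$ with $(g-1)(k-g+1)\le n$.

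The remaining error terms split into two groups. First, all clusters with $\|\Gamma\|-|\Gamma|\ge g$ are controlled by the tail bound of Lemma~\ref{lemConverge} applied separately to $G$ and $H_n$, giving a total contribution of at most $2k\gamma^g=O(k^{g+1}/n^g)$ with $\gamma=(d+1)e^5 k/n$, a factor $O(k/n)$ smaller than $A+B$. Second, the non-$C_g$ single-polymer clusters at $|S|=g$ (namely $C_{g-1}$ with a pendant and a few denser graphs on $g$ vertices with short cycles), together with the low-level multi-polymer clusters that combine a size-$(g-1)$ polymer supporting $C_{g-1}$ with at least one size-$2$ polymer, form a finite list whose total magnitude can be bounded directly: each relevant $|t(F)|$ is $O(d^g n^{1-g})$ via Lemma~\ref{lem:homcounts2} and the contraction identity, and summing over the $O(\binom{k}{g})$ choices of polymer and labelling gives a total $O(d^g k/n)\cdot(A+B)$, which is negligible provided $k\le\eps_1 n$ for $\eps_1=\eps_1(d,g)$ chosen small enough.

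Choosing $\eps_1$ so that the combined error is at most $\tfrac12(A+B)$ yields $\log[i_k(H_n)/i_k(G)]\ge \tfrac{c_1}{2}k^g n^{1-g}$, which proves the lemma with $c=c_1/2$. The main technical obstacle is the explicit control of the non-$C_g$ single-polymer contributions at $|S|=g$: since these have the same order of magnitude as the leading term $B$, one must either enumerate them carefully via the contraction identity (tracking signs and showing they are a controlled fraction of $A$) or repackage them together with the $C_g,C_{g-1}$ contributions in the spirit of Lemma~\ref{lemGirthGap}, to make sure they do not erase the positive gap $\delta n^{1-g}$ coming from the strict $(d,g)$-optimality of $H$.
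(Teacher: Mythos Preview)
Your approach is essentially the same as the paper's: apply Lemma~\ref{lemConverge}, split clusters according to whether $\|\Gamma\|-|\Gamma|\ge g$ (handled by the tail bound) or $\le g-1$ (handled explicitly), isolate the $C_{g-1}$ and $C_g$ contributions $A$ and $B$, and feed in Lemma~\ref{lemGirthGap}. Your computation $A+B\ge \binom{k}{g}\frac{(g-1)!}{2}\delta n^{1-g}$ is correct and matches the paper's, as does the classification of the remaining low-order clusters (single polymer of size $g-1$ or $g$, or one polymer of size $g-1$ together with one of size $2$).

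There is one genuine looseness. In bounding the ``second group'' of errors you write that each relevant $|t(F)|$ is $O(d^g n^{1-g})$ and then assert the total is $O(d^g k/n)\cdot(A+B)$. That arithmetic does not follow: with only $|t(F)|=O_{d,g}(n^{1-g})$ and $\binom{k}{g}$ choices, the total is $O_{d,g}(k^g n^{1-g})$, which is the \emph{same} order as the main term $\binom{k}{g}\frac{(g-1)!}{2}\delta n^{1-g}$ and could swamp it (since $\delta$ may be tiny). What actually makes these terms harmless --- and what your final paragraph correctly identifies as the crux --- is the sharper bound $|t(F)|\le O_{d,g}(1/n)\,|t(C_{g-1})|$ for every non-$C_g$ graph $F$ on $g$ vertices arising here (the $C_{g-1}$ with a pendant edge, and for $g=4$ the diamond and $K_4$). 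This holds because any such $F$ contains a $C_{g-1}$, so by the contraction identity $\hom(F,\cdot^\circ)$ in both $G$ and $H_n$ differs only in terms involving $\inj(C_{g-1},\cdot)$ and $\inj(F,\cdot)$, each bounded by $O_{d,g}(1)\cdot\inj(C_{g-1},G)$. With this bound the total non-$C_g$ error at $|S|=g$, together with the $(S_1,S_2)$ clusters with $\{|S_1|,|S_2|\}=\{g-1,2\}$, is $O_{d,g}(k/n)\cdot A$, not merely $O_{d,g}(1)\cdot B$; that extra factor $k/n$ is exactly what lets you absorb it for $k\le\eps_1 n$. The paper carries out precisely this computation.
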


\begin{proof}
Recall that $|\Gamma|$ denotes the number of polymers in a cluster $\Gamma$ and $\| \Gamma\|$ denotes the sum of their sizes.  For $k \le e^{-5} n/(d+1)$, we can apply Lemma~\ref{lemConverge} with $t= g$ to obtain
\[
\sum_{\Gamma: \|\Gamma\| - |\Gamma| \leq g-1}w_G(\Gamma) - k \gamma^g \le \log \Xi_k(G) \le \sum_{\Gamma: \|\Gamma\| - |\Gamma| \leq g-1}w_G(\Gamma) + k \gamma^g
\]
with $\gamma = (d+1) e^5 k/n$. 
If $\Gamma$ is a cluster such that $\|\Gamma\| - |\Gamma| \leq g-1$, then one of the following holds:
\begin{enumerate}
\item Each polymer in $\Gamma$ has size $\leq g-2$,
\item $\Gamma=(S)$ where $|S|\in \{g-1, g\}$,
\item $\Gamma=(S_1, S_2)$ where $\{|S_1|, |S_2|\}=\{g-1, 2\}$.
\end{enumerate}

We note that if each polymer in $\Gamma$ has size at most $g-2$,
then by Lemma~\ref{lem:homcounts2}, 
$w_G(\Gamma)$ is the same for all $d$-regular graphs $G$ on $n$ vertices with 
girth at least $g-1$.
It follows that
\begin{equation}
\label{eqLogRatio}
\begin{aligned}
\log \frac{i_k(H_n)}{i_k(G)} &= \log\frac{\Xi_k(H_n)}{\Xi_k(G)} \\&\ge \sum_{\substack{\Gamma = (S)\\ |S|\in\{g-1,g\}}} (w_{H_n}(\Gamma)-w_G(\Gamma))+ \sum_{\substack{\Gamma = (S_1, S_2)   \\\mathclap{\{|S_1|, |S_2|\}=\{g-1, 2\}}}}  (w_{H_n}(\Gamma)-w_G(\Gamma))  - 2k \gamma^g\, .
\end{aligned}
\end{equation}

If $S$ is a polymer of size $g-1$, then by Lemma~\ref{lem:homcounts2} $t(F)=0$ for all graphs $F\in \cC_S$ except those isomorphic to $C_{g-1}$, giving for $\Gamma=(S)$ that
\[
w_{H_n}(\Gamma)-w_G(\Gamma)= (-1)^{g-1}\frac{(g-2)!}{2}t(C_{g-1})=-\frac{(g-2)!}{2}t(C_{g-1})\, .
\]

Suppose $S$ is a polymer of size $g$,
and suppose also that $g\geq 6$. 
Then any graph $F\in \cC_S$ of girth at least $g-1$ is either a tree, 
a cycle of length $g-1$ with a pendant edge or a cycle of length $g$.
If $g=4$, then we have two additional possibilities for a graph
$F\in \cC_S$: $F=\Dgraph$ and $F=\Kfourgraph$.

If $F$ is a cycle of length $g-1$ plus a pendant edge then by~\eqref{eqcontract}
we have
\[
|t(F)|=n^{-g}[\inj(F,G)+(g-1)\inj(C_{g-1},G)] \leq n^{-g}(d+g)\inj(C_{g-1}, G)\leq \frac{d+g}{n}|t(C_{g-1})|\, .
\]
Similarly, if $F=\Dgraph$ or $F=\Kfourgraph$ (and $g=4$) then
\[
t(F)= O(1/n)t(C_{3})
\]
where the implied constant depends only on $d$. 
It follows that if $\Gamma=(S)$, then 
\[
w_{H_n}(\Gamma)-w_G(\Gamma)= \frac{(g-1)!}{2}t(C_g)+ O\left(\frac{1}{n}\right)t(C_{g-1})\, ,
\]
where the implied constant depends only on $d$ and $g$.
Finally, if $\Gamma=(S_1, S_2)$ where $\{|S_1|, |S_2|\}=\{g-1, 2\}$,
then 
\[
w_{H_n}(\Gamma)-w_G(\Gamma)=-\frac{d+1}{4n}(g-2)!t(C_{g-1})\, .
\]

From~\eqref{eqLogRatio} we now have
\begin{align*}
\log\frac{i_k(H_n)}{i_k(G)}&\ge
\begin{aligned}[t]\frac{(g-2)!}{2}\Bigg[&-\binom{k}{g-1}t(C_{g-1})+
(g-1)\binom{k}{g}\left( t(C_g)+ O\left(\frac{1}{n}\right)t(C_{g-1})\right)\\
&-\binom{k}{g-1}(g-1)(k-1)\frac{d+1}{n} t(C_{g-1})\Bigg] - 2k \gamma^g\, .
\end{aligned}\\
&\geq  \frac{(g-1)!}{2}\binom{k}{g}t(C_g)-\frac{(g-2)!}{3}\binom{k}{g-1}t(C_{g-1}) - 2k \gamma^g\, ,
\end{align*}
provided $\eps:=k/n$ is sufficiently small (recall that $t(C_{g-1})\le 0$), which small enough $\eps_1=\eps_1(d,g)$ and $k\le \eps_1 n$ permits. 

Since $\alpha \ge  1/10$, Lemma~\ref{lemGirthGap} gives
\begin{align}
  \log\frac{i_k(H_n)}{i_k(G)}&\geq 
\frac{(g-1)!}{2}\binom{k}{g} 
\left(\delta n^{1-g} +\frac{g}{n}t(C_{g-1})\right)
- \frac{(g-2)!}{3}\binom{k}{g-1}t(C_{g-1})-2 k  \left  ( \frac{  (d+1)e^5 k } {n  }  \right)^g \nonumber\\
&= \Omega( \eps^g n )   + O \left ( \eps^g n^{g-1}\right) t(C_{g-1}) - \Omega\left(\eps^{g-1} n^{g-1}      \right ) t(C_{g-1} ) - O \left( \eps^{g+1}  n  \right) \label{eqloggap}
\end{align}
where the $\Omega(\cdot)$ and $O(\cdot)$ terms are all positive and the implied constants depend only on $d$ and $g$. 
The result follows provided $\eps_1$ is sufficiently small. 
\end{proof}

We now complete the proof of Theorem~\ref{thmGirth} by treating the case  $\alpha < 1/10$.  
The intuition behind the proof is that a uniformly chosen independent set of size $k$ in $G$
should intersect $G_0$ in approximately $\tfrac{k}{n}|G_0|$ vertices with high probability.
In this case we can apply Lemma~\ref{lemGirthNoH} to get the desired inequality.

\begin{proof}[Proof of Theorem~\ref{thmGirth} for independent sets]
By the above results we may assume $k \ge g$ and $0 < \alpha < 1/10$. 
We will show that there exists $\eps_2=\eps_2(d,g)$ sufficiently small such that if $g \le k\le \eps_2 n$ then $i_k(G)< i_k(H_n)$.

With $a \wedge b$ denoting the minimum of $a$ and $b$, we can write
\begin{align*}
 i_k(G) &= \sum_{j=0}^{ k \wedge \alpha n} i_j(G_0) i_{k-j}(G_H) \\
 i_k(H_n) &= \sum_{j=0}^{ k \wedge \alpha n} i_j(H_0) i_{k-j}(G_H) \,.
\end{align*}
Let $\eps_1 = \eps_1(d,g)$ be the constant from Lemma~\ref{lemGirthNoH}.  
Since $i_j(H_0) \ge i_j(G_0)$, for all $j \le \lfloor \eps_1\alpha n\rfloor$,
\begin{align} \label{eqikdiff}
i_k(H_n)-i_k(G)
\geq \sum_{j=\lfloor \eps_1\alpha n\rfloor}^{k \wedge \alpha n}  (i_j(H_0)-i_j(G_0))i_{k-j}(G_H)\, .
\end{align}

We consider two cases.  
Suppose first that $\lfloor \eps_1\alpha n\rfloor \leq g$, and note that this implies via~\eqref{eqikdiff} and the simple cases $k< g$ applied to $H_0$ and $G_0$ that in fact
\begin{align*}
  i_k(H_{n}) - i_k(G) & \ge \sum_{j=g}^{k \wedge \alpha n} (i_j (H_0) - i_j(G_0)) i_{k-j}(G_H)  \,.
\end{align*}
By Lemma~\ref{lemGirthNoH} we have $i_{g} (H_0) > i_{g}(G_0)$, and these are both natural numbers so differ by at least one. 
Hence it suffices to show that
\begin{align}\label{eqsumg}
i_{k-g}(G_H) > \sum_{j=g+1}^{k \wedge \alpha n} (i_j (G_0) - i_j(H_0)) i_{k-j}(G_H) \,.
\end{align}

We will use the following fact, shown by counting the ways of extending each independent sets of size $t$ in a $d$-regular graph $G$ on $n$ vertices. For each such independent set there are at most $n-t$ and at least $n-(d+1)t$ ways of extending it by a single vertex. Then whenever $n > (d+1)t$ we have
\[   \frac{t+1}{n} \le \frac{t+1}{n-t} \le \frac{i_t(G)}{i_{t+1}(G)} \le \frac{t+1}{n - (d+1)t}  \,.  \]

When $\eps_2$ is small enough in terms of $d$, for $t\leq \eps_2 n$ we therefore have
\begin{align}\label{eqfreevol}
  \frac{i_{t-1}(G_H)}{i_t(G_H)}\leq \frac{t}{(1-\alpha)n-(d+1)(t-1)}\leq 2\eps_2\, ,
\end{align}
(using that $\alpha\le 1/10$).
It follows that the right hand side of \eqref{eqsumg} is at most
\begin{align*}
  i_{k-g}(G_H)\sum_{j=g+1}^{k \wedge \alpha n}  \binom{\alpha n}{j}(2\eps_2)^{j-g}< i_{k-g}(G_H)\sum_{j=g+1}^{k \wedge \alpha n}  \binom{g/\eps_1}{j}(2\eps_2)^{j-g}< i_{k-g}(G_H)\, ,
\end{align*}
for $\eps_2$ sufficiently small and so
\eqref{eqsumg} holds.

Suppose now that $\lfloor \eps_1\alpha n\rfloor > g$. 
When $t< k \wedge \alpha n$ and $k\le \eps_2 n$ for $\eps_2$ small enough in terms of $d$, we have
\begin{align*}
\frac{i_{t+1}(G_0)i_{k-t-1}(G_H)}{i_t(G_0)i_{k-t}(G_H)}
&\le \frac{\alpha n -t}{t+1}\frac{k-t}{(1-\alpha)n-(d+1)(k-t-1)}\\
&\le \frac{\alpha n}{t+1}\cdot \frac{k}{(1-\alpha)n-(d+1)k}\,,
\end{align*}
which is decreasing as $t$ increases, so when we also have $t\ge \alpha \eps_1 n$ (and we use that $\eps_2$ is small enough in terms of $d$ and $k\le\eps_2 n$), this gives
\[ \frac{i_{t+1}(G_0)i_{k-t-1}(G_H)}{i_t(G_0)i_{k-t}(G_H)} \le 2\frac{\eps_2}{\eps_1}\,.\]
Thus for $\ell=\lfloor \eps_1\alpha n\rfloor$ and $j\ge 0$ such that $\ell+j \le k\wedge \alpha n$ we have
\[ i_{\ell+j}(G_0)i_{k-\ell-j}(G_H) \le (2\eps_2/\eps_1)^j \cdot i_{\ell}(G_0)i_{k-\ell}(G_H)\,.\] 

Now working from~\eqref{eqikdiff} we have
\begin{align*}
  i_k(H_n)-i_k(G) 
     &\ge (i_\ell(H_0)-i_\ell(G_0))i_{k-\ell}(G_H) - \sum_{j=1}^{(k\wedge\alpha n) - \ell}i_{\ell+j}(G_0)i_{k-\ell-j}(G_H)
  \\ &\ge (i_\ell(H_0)-i_\ell(G_0))i_{k-\ell}(G_H) - \frac{2\eps_2/\eps_1}{1-2\eps_2/\eps_1}i_{\ell}(G_0)i_{k-\ell}(G_H)
  \\ &\ge (i_\ell(H_0)-i_\ell(G_0))i_{k-\ell}(G_H) - 3\frac{\eps_2}{\eps_1}i_{\ell}(G_0)i_{k-\ell}(G_H)\,,
\end{align*}
when $\eps_2$ is small enough in terms of $\eps_1$. 
Since $i_\ell(G_0)i_{k-\ell}(G_H)>0$ it therefore suffices to show that $i_\ell(H_0)/i_\ell(G_0)>1+3\eps_2/\eps_1$.
By Lemma~\ref{lemGirthNoH} with $G_0$ in place of $G$, recalling that $|H_0|=|G_0|=\alpha n$, we have 
\begin{align*}
i_\ell(H_0)/i_\ell(G_0)
&\ge \exp\left\{c\left(\frac{\ell}{\alpha n}\right)^g \alpha n\right\}\\
&> 1+3\eps_2/\eps_1\, ,
\end{align*}
for $\eps_2$ sufficiently small.
\end{proof}

\section{Proof of Theorem~\ref{thmGirth} for matchings}
\label{secMatchings}
The proof of Theorem~\ref{thmGirth} for matchings is essentially the same as that for independent sets subject to 
some straightforward modifications which we detail now. 

Given a graph $G$, the \emph{line graph} of $G$, denoted by $L(G)$,
is the graph on vertex set $E(G)$ where $e\sim f$ if and only if $e,f$ share an endpoint in $G$ (and $e\neq f$).
We then have for all $k$,
\[
m_k(G)= i_k(L(G))\, .
\]
If $G$ is a $d$-regular graph on $n$ vertices then $L(G)$ is a
$D$-regular graph on $N$ vertices where $D=2(d-1)$ and $N= nd/2$. 
We may therefore simply replace $G$ and $H_n$ in the above arguments with their line graphs
and replace $d$ with $D$ and $n$ with $N$. 

The only step that does not follow trivially under this substitution is in establishing Lemma~\ref{lemGirthGap}.
Indeed this is the only step where we apply the $(d,g)$-optimality of $H$.  
We bridge this gap with the following lemma.
Given a graph $F$, let
\[
\tau(F):=t(F, L(H_n)^\circ)-t(F, L(G)^\circ)\, .
\]
We let $\sub(F,G)$ denote the number of copies of $F$ in $G$ as a subgraph.
\begin{lemma}
If at least $10\%$ of the vertices of $G$ belong to components not isomorphic to $H$ then
there exists $\delta=\delta(d,g)$ such that 
\[
\tau(C_{g})\geq \delta N^{1-g}+\frac{gD}{N} \tau(C_{g-1})\, .
\]
Moreover
\[
\tau(C_{g-1})\leq 0 \, .
\]
\end{lemma}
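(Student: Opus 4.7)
The plan is to mirror the proof of Lemma~\ref{lemGirthGap} in the line graph setting, using the identity $\hom(F, L(G)^\circ) = \sum_{\pi \in \Pi}\inj(F/\pi, L(G))$ together with the $(d,g)$-optimality of $H$. The essential new observation is that an injective homomorphism $\phi: C_g \to L(G)$ corresponds bijectively to an ordered $g$-tuple of distinct edges $(e_1, \dots, e_g)$ of $G$ with $e_i \cap e_{i+1} \neq \emptyset$ for each $i$ (indices mod $g$); that is, an oriented closed edge-walk of length $g$ in $G$ with pairwise distinct edges. The same description applies to $\inj(C_g/\pi, L(G))$ for each $\pi$, with the appropriate identifications dictated by $\pi$.

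I would then partition these edge-walks by the isomorphism type of the subgraph $F \subseteq G$ traced by $\{e_1, \dots, e_g\}$. Because $L(G)$ contains a clique $K_d$ on the $d$ edges incident to each vertex of $G$, a large fraction of $g$-cycles in $L(G)$ are ``local'', with $F$ supported inside a single such clique or inside a small tree-like configuration in $G$. Crucially, by $d$-regularity the number of walks associated to each local $F$ is a fixed function of $n, d, g$ and is identical for $G$ and $H_n$, so all such local contributions cancel in $\tau(C_g) = t(C_g, L(H_n)^\circ) - t(C_g, L(G)^\circ)$. The girth assumption (at least $g-1$ in $G$ and at least $g$ in $H_n$) rules out most remaining possibilities, leaving as non-universal traces only $F = C_g$, $F = C_{g-1}$, and a bounded (depending on $d,g$) set of small augmentations of $C_{g-1}$ (e.g.\ a $C_{g-1}$ with one pendant edge).

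Packaging the surviving terms as in~\eqref{eqtinj} yields an identity of the form
\[
\tau(C_g) = N^{-g}\bigl[\inj(C_g, H_n) - \inj(C_g, G)\bigr] + \tfrac{gD}{N}\tau(C_{g-1}) + (\text{terms of order } N^{-g}\inj(C_{g-1},G)/n),
\]
where the factor $gD$ encodes the $g$ ways to contract an edge of $C_g$ together with the $D$ ways to choose the ``extra'' edge at a shared vertex in $G$ that produces an $L(G)$-walk of length $g$ from a $C_{g-1}$ in $G$. Applying strict $(d,g)$-optimality of $H$ exactly as in~\eqref{eqCggap} gives $\inj(C_g, H_n) - \inj(C_g, G) \geq \eta \alpha n \geq \eta n/10$, and after absorbing the small correction of relative order $1/n$ into the $\tau(C_{g-1})$ term, this yields $\tau(C_g) \geq \delta N^{1-g} + \tfrac{gD}{N}\tau(C_{g-1})$ for some $\delta = \delta(d,g) > 0$ (recalling $N = nd/2$).

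For the second statement $\tau(C_{g-1}) \leq 0$, the same edge-walk classification applied to $C_{g-1}$ shows that $\inj(C_{g-1}, L(G))$ splits as (universal local contributions depending only on $n, d, g$) plus a non-negative term counting walks whose trace in $G$ is a genuine $(g-1)$-cycle. Since $H_n$ has girth at least $g$, the latter vanishes for $H_n$ but is $\geq 0$ for $G$, so $\inj(C_{g-1}, L(G)) \geq \inj(C_{g-1}, L(H_n))$ and hence $\tau(C_{g-1}) \leq 0$. The main obstacle I anticipate is the careful bookkeeping required to classify edge-walks whose trace visits vertices of $G$ with multiplicity and to confirm that every such ``degenerate'' walk either cancels universally in $\tau$ or is absorbed into the $\inj(C_g,G)$ and $\inj(C_{g-1},G)$ terms, which is the combinatorial heart of the argument.
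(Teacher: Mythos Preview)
Your approach is essentially the paper's: both proofs apply the contraction identity \eqref{eqcontract} with $L(G)$ in place of $G$, classify the subgraphs of $G$ whose line graphs carry a copy of $C_g$ or $C_{g-1}$ (your ``traces'' are exactly the paper's ``edge-minimal subgraphs''), observe that tree-supported traces are universal in $n,d,g$, and then invoke strict $(d,g)$-optimality via \eqref{eqCggap}.

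One point deserves correction. Your displayed identity places the $C_{g-1}$-with-pendant contribution into an error of relative order $1/n$, but this is not the case: that term is of the \emph{same} order as $\tfrac{gD}{N}\tau(C_{g-1})$ and cannot be absorbed as a lower-order correction. Concretely, the two $C_{g-1}$ contributions are additive, not multiplicative --- the coefficient $g$ arises from the $g$ contractions $C_g\to C_{g-1}$ in \eqref{eqcontract}, while the pendant trace $P=C_{g-1}+\text{edge}$ contributes separately through $\inj(C_g,L(G))$. The paper handles the latter by the explicit bound $\sub(P,G)\le (g-1)(D-2)\,\sub(C_{g-1},G)$, which after the usual $\sub/\inj$ rescaling yields $\inj(C_g,L(G))\le \inj(C_g,G)+g(D-2)\,\inj(C_{g-1},G)+f_8$. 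Adding the contraction term $g\,\inj(C_{g-1},G)$ gives a total coefficient bounded by $gD$ (with some slack), which is exactly the constant in the lemma. Once you replace your heuristic ``$g\times D$'' explanation and the spurious $1/n$ error with this bookkeeping, your argument is the paper's.
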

\begin{proof}
In the following $f_1,f_2,\ldots$ represent functions of $n,d$ and $g$ only.
Since $G$ has girth $g-1$, the edge-minimal subgraphs of $G$ whose line graphs contain $C_{g-1}$
are $K_{1,g-1}$ and $C_{g-1}$ itself. 
The line graph of $C_{g-1}$ contains one copy of $C_{g-1}$ whereas the line graph of $K_{1,g-1}$
(a clique on $g-1$ vertices) contains $(g-2)!/2$ copies. Thus
\[
\sub(C_{g-1}, L(G))=\sub(C_{g-1}, G)+\frac{(g-2)!}{2}n\binom{d}{g-1}=\sub(C_{g-1}, G)+f_1\, .
\]
Since $\inj(C_{g-1}, G)=2(g-1)\sub(C_{g-1}, G)$ (and similarly for $L(G)$) we have
\begin{align}\label{eqLCg-1}
\inj(C_{g-1}, L(G))=\inj(C_{g-1}, G)+ f_2\, .
\end{align}
If $F$ is a graph on less than $g-1$ vertices then, since $G$ has girth at least $g-1$, the edge minimal subgraphs
of $G$ that contain $F$ must all be trees. 
It follows that $\inj(F, L(G))$ depends only on $n,d$ and $F$.
By the relation~\eqref{eqcontract} (with $L(G)$ in place of $G$) and~\eqref{eqLCg-1}
we therefore have
\[
\hom(C_{g-1}, L(G)^\circ)=\inj(C_{g-1}, L(G))+f_3=\inj(C_{g-1}, G)+ f_4\, .
\]
Similarly
\[
\hom(C_{g-1}, L(H_n)^\circ)=\inj(C_{g-1}, H_n)+ f_4=f_4
\]
and so
\begin{align}\label{eqtauC}
\tau(C_{g-1})=-N^{1-g}\inj(C_{g-1}, G)\leq 0\, .
\end{align}
This proves the second assertion of the lemma. 

By \eqref{eqcontract} again and~\eqref{eqLCg-1} we have
\begin{align*}
\hom(C_g, L(G)^\circ)&=\inj(C_g, L(G))+ g\inj(C_{g-1}, L(G))+f_5 \\
& =\inj(C_g, L(G))+ g\inj(C_{g-1}, G)+f_6\, .
\end{align*}
Similarly 
\[
\hom(C_g, L(H_n)^\circ)=\inj(C_g, L(H_n))+f_6\, .
\]

The edge-minimal subgraphs of $G$ whose line graphs contain $C_{g}$
 are $K_{1,g}$, $C_{g}$ and $C_{g-1}$ with a pendant edge. 
Let $P$ denote $C_{g-1}$ with a pendant edge. 
The line graphs of $C_g$ and $P$ both contain one copy of $C_g$ and so
 \[
\sub(C_g, L(G))= \sub(C_g, G)+ \sub(P, G) + f_7\,.
\]
We also have 
\[
\sub(P, G)\le (g-1)(D-2)\cdot\sub(C_{g-1},G) \, ,
\]
 by bounding the number of ways of extending a copy of $C_{g-1}$ in $G$ to a copy of $P$.
Therefore
\[
\sub(C_g, L(G))\leq \sub(C_g, G)+(g-1)(D-2)\cdot \sub(C_{g-1}, G) + f_7\, .
\]
Again since $\inj(C_{g}, G)=2g\sub(C_{g-1}, G)$ (and similarly for $L(G)$) we have
\[
\inj(C_g, L(G))\leq \inj(C_g, G)+g(D-2)\cdot \inj(C_{g-1}, G) + f_8\, .
\]
Similarly, noting that $H_n$ is $C_{g-1}$-free, we have
\[
\inj(C_g, L(H_n))= \inj(C_g, H_n)+ f_8\, .
\]

Putting everything together we get
\begin{align*}
N^g\tau(C_g)&= \inj(C_g, L(H_n))-\inj(C_g, L(G))- g\inj(C_{g-1}, G)\\
&\ge \inj(C_g, H_n)-\inj(C_g, G)- gD\inj(C_{g-1}, G)\\
&\ge \frac{\eta}{10} n- gD\inj(C_{g-1}, G)\\
&=\frac{\eta}{5d} N+gDN^{g-1}\tau(C_{g-1})
\, ,
\end{align*}
where in the third line we used \eqref{eqCggap} and for the final equality we used~\eqref{eqtauC}.
\end{proof}

\section{Extensions}\label{secConclude}

In light of Theorem~\ref{thmGirth},  we would like to find examples of strictly $(d,g)$-optimal graphs. 
One class of examples are \emph{Moore graphs}.
If $g$ is even, a $(d,g)$-Moore graph is a $d$-regular graph of girth $g$ with precisely 
\[
1+(d-1)^{(g-2)/2}+d\sum_{j=0}^{(g-4)/2}(d-1)^j
\]
vertices.
If $g$ is odd a $(d,g)$-Moore graph is a $d$-regular graph of girth $g$ with precisely 
\[
1+ d\sum_{j=0}^{(g-3)/2} (d-1)^j
\]
vertices. 
In particular $K_{d,d}$ is a $(d,4)$-Moore graph and $K_{d+1}$ is a $(d,3)$-Moore graph.  It is not difficult to generalize Lemma~\ref{lemkddopt}
to show that for $g\ge 3$ and $d\ge 2$, any $(d,g)$-Moore graph is indeed 
strictly $(d,g)$-optimal (see also~\cite{azarija2015moore}).  Perarnau and Perkins~\cite{perarnau2018counting} conjectured that Moore graphs of even and odd girth are maximizers and minimizers respectively of $\frac{1}{n} \log i(G)$.   Theorem~\ref{thmGirth} gives some evidence towards this conjecture, along with the following minimization result which follows by mimicking the proof of Theorem~\ref{thmGirth} and using the fact that $t(C_g, \cdot)$ terms appear in the cluster expansion with sign depending on the parity of $g$. 
\begin{theorem}
\label{thmGirthMin}
Let $g\geq 3$ be odd, let $d\geq 2$, and suppose a strictly $(d,g)$-optimal graph $H$ exists. 
Then there exists $\eps=\eps(d,g)>0$ so that the following holds. 
Let $h = |V(H)|$ and suppose $G$ is a $d$-regular graph of girth at least $g$ on $n$ vertices where $ n$ is divisible by $h$. 
Then for $k\leq \eps n$,
\[
i_k(G)\geq i_k (H_n) \quad\text{and}\quad m_k(G)\geq m_k (H_n)  \, ,\]
where $H_n$ consists of $n/h$ copies of $H$. 
Moreover, if $G$ is not isomorphic to $H_n$ then the inequality is strict for $k\ge g$.
\end{theorem}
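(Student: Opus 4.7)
The proof of Theorem~\ref{thmGirthMin} follows the template of Theorem~\ref{thmGirth} with one pivotal sign flip. For a polymer $S$ of size $g$, the Hamiltonian cycle $F = C_g \in \cC_S$ contributes $(-1)^g\, (g-1)!/2 \cdot t(C_g, G^\circ)$ to the weight $w_G(S)$, since the product $\prod_{ij \in E(C_g)}(\1_{\phi(i)\phi(j)\notin E^\circ}-1)$ carries the sign $(-1)^{|E(C_g)|}$. For even $g$ this sign is $+1$, yielding the maximization of Theorem~\ref{thmGirth}; for odd $g$ it is $-1$, and combined with the hypothesis that $H$ is strictly $(d,g)$-optimal (which provides $t(C_g) := t(C_g, H_n^\circ) - t(C_g, G^\circ) > 0$), this reverses the direction of the cluster-expansion comparison and delivers $i_k(G) \geq i_k(H_n)$.

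The plan is to carry out the three-stage argument of Section~\ref{secGirth}. For $k < g$ the equality $i_k(G) = i_k(H_n)$ is immediate, since by Lemma~\ref{lem:homcounts2} all cluster weights involving polymers of size $\leq g-1$ depend only on $n$, $d$, and polymer structure. Writing $G = G_0 \cup G_H$ with $\alpha = |G_0|/n$, I first treat $\alpha \geq 1/10$ via analogs of Lemmas~\ref{lemGirthGap} and~\ref{lemGirthNoH}. The gap lemma is actually cleaner than in the even case: since $G$ has girth at least $g$, neither $G$ nor $H_n$ contains $C_{g-1}$, so the contraction identity~\eqref{eqcontract} gives $t(C_g) = n^{-g}[\inj(C_g, H_n) - \inj(C_g, G)]$ with no $t(C_{g-1})$ cross-term, and~\eqref{eqCggap} then supplies $t(C_g) \geq \delta n^{1-g}$. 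In the analog of Lemma~\ref{lemGirthNoH}, the truncated cluster expansion simplifies further: at $|S| = g$ the only connected $F \in \cC_S$ yielding nonzero $t(F)$ is $F = C_g$ itself, since any connected graph on $g$ vertices other than trees and $C_g$ contains a cycle of length strictly less than $g$ (and trees contribute zero to $t$), while the $|S|=g-1$ and $\{|S_1|,|S_2|\}=\{g-1,2\}$ contributions vanish because $t(C_{g-1})=0$. Running the analog of~\eqref{eqloggap} with reversed sign then produces $i_k(G) \geq \exp\{c k^g n^{1-g}\} i_k(H_n)$ for $g \leq k \leq \eps_1 n$. For $\alpha < 1/10$ I invoke the interpolation scheme from the end of Section~\ref{secGirth} with the roles of $H_n$ and $G_0$ swapped throughout: the decomposition $i_k(G) - i_k(H_n) = \sum_j (i_j(G_0) - i_j(H_0)) i_{k-j}(G_H)$ is dominated by the term at $j = \ell = \lfloor \eps_1 \alpha n \rfloor$, where the case-$\alpha \geq 1/10$ analysis applied to $G_0$ supplies a multiplicative gap of the form $i_\ell(G_0)/i_\ell(H_0) \geq \exp\{c(\ell/(\alpha n))^g \alpha n\}$, while the ratio bound~\eqref{eqfreevol} and the monotonicity argument control the remaining summands.

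The matchings case reduces to independent sets in the line graph $L(G)$ exactly as in Section~\ref{secMatchings}: one verifies an analog gap statement $\tau(C_g) \geq \delta N^{1-g}$, the $\tau(C_{g-1})$ correction being absent at leading order because $G$ of girth $\geq g$ leaves no short-cycle structures in $G$ to produce the needed line-graph terms. The main obstacle I anticipate is keeping the signs consistent across all terms of the truncated cluster expansion after the flip: in particular, verifying that no cycle contribution from polymers of size larger than $g$ (such as bicyclic configurations that can arise when girth is exactly $g$) upsets the dominance of the $-(g-1)!/2 \cdot t(C_g)$ main term. The girth-$\geq g$ hypothesis substantially simplifies this bookkeeping relative to the even-$g$ case, since it eliminates the $t(C_{g-1})$ terms that had to be carefully dominated in~\eqref{eqloggap}.
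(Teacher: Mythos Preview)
Your proposal is correct and follows precisely the approach the paper indicates: the paper's entire proof of Theorem~\ref{thmGirthMin} is the single sentence that it ``follows by mimicking the proof of Theorem~\ref{thmGirth} and using the fact that $t(C_g,\cdot)$ terms appear in the cluster expansion with sign depending on the parity of $g$,'' and you have accurately reconstructed that mimicry, including the correct observation that the hypothesis of girth $\geq g$ (rather than $\geq g-1$) eliminates the $t(C_{g-1})$ terms and thereby simplifies the analogs of Lemmas~\ref{lemGirthGap} and~\ref{lemGirthNoH}.
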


In particular, since $K_{d+1}$ is $(d,3)$-optimal it minimizes the number of matchings of small size, which proves the second statement in Theorem~\ref{thmCliquemin}.

A further example is the analogous result  to Theorem~\ref{thmMain} for cubic graphs of girth at least $5$.
Let $W$ denote the \emph{Heawood graph}: the $(3,6)$-Moore graph on $14$ vertices.  For $n$ divisible by $14$, let $W_n$ be the union of $n/14$ copies of $W$.   In~\cite{perarnau2018counting} it was proved that $W_n$ maximizes the independence polynomial for all values of $\lam$ over all $3$-regular graphs of girth at least $5$.  Here we show that this holds coefficient-by-coefficient for large enough $n$. 
\begin{theorem}
\label{thmHeawood}
For all sufficiently large $n$ divisible by $14$, and all $3$-regular graphs $G$ of girth at least $5$ on $n$ vertices, $ i_k(G) \le i_k(W_n)$ for all $k$.  The inequality is strict for $k \ge6$. 
\end{theorem}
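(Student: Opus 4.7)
The plan is to mirror the proof of Theorem~\ref{thmMain}: combine the canonical cluster expansion of Theorem~\ref{thmGirth} for small $k$ with a stability-type coefficient bound for the complementary range. Since the Heawood graph $W$ is the $(3,6)$-Moore graph, and the excerpt notes that every $(d,g)$-Moore graph is strictly $(d,g)$-optimal (a direct generalization of Lemma~\ref{lemkddopt}), $W$ is strictly $(3,6)$-optimal. Applying Theorem~\ref{thmGirth} with $d=3$, $g=6$, and $H=W$ therefore yields an $\eps=\eps(3,6)>0$ such that, for every $n$ divisible by $14$ and every $3$-regular graph $G$ of girth at least $5$ on $n$ vertices, $i_k(G)\leq i_k(W_n)$ for all $k\leq \eps n$, with strict inequality whenever $G\not\cong W_n$ and $k\geq 6$.

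For the complementary range $\eps n \leq k \leq n/2$, I would establish the analog of Theorem~\ref{thmStability} for the class of $3$-regular graphs of girth at least $5$: for every $\eps>0$ there is $N=N(\eps)$ so that for $n\geq N$ divisible by $14$, every such $G$, and every $k\geq \eps n$, one has $i_k(G)\leq i_k(W_n)$, with strict inequality whenever $G\not\cong W_n$ and $k\leq n/2$. The input here is the Perarnau--Perkins theorem~\cite{perarnau2018counting}, which shows that $W_n$ maximizes $Z_G(\lam)$ for all $\lam\geq 0$ over this class; this plays exactly the role that the Galvin--Tetali and Zhao maximization of $Z_G(\lam)$ by $K_{d,d}$ played in the proof of Theorem~\ref{thmStability} in~\cite{DaviesCoefficients}.

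To pass from that polynomial-level bound to a coefficient-level bound, I would follow the blueprint of~\cite{DaviesCoefficients}: first promote the Perarnau--Perkins maximization to a quantitative stability statement (if $\tfrac{1}{n}\log Z_G(\lam)$ is within $\delta$ of $\tfrac{1}{14}\log Z_W(\lam)$ for some fixed $\lam>0$, then all but a vanishing fraction of vertices of $G$ lie in components isomorphic to $W$), and then invoke the interpolation/saddle-point argument of~\cite{DaviesCoefficients} at the value of $\lam$ conjugate to the density $k/n$ to compare $i_k(G)$ with $i_k(W_n)$. Combining the two regimes closes the argument: strict inequality for $k\geq 6$ comes from the cluster expansion step, and strict inequality for $\eps n\leq k\leq n/2$ with $G\not\cong W_n$ comes from the stability step.

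The main obstacle will be upgrading the Perarnau--Perkins bound to a quantitative stability statement strong enough to feed the interpolation: the original proof in~\cite{perarnau2018counting} identifies the maximizer but is not manifestly stable, so one would most likely need to revisit its occupancy-method or local tree-recursion analysis and extract an explicit quantitative gap for graphs whose components are not all isomorphic to $W$. Once that ingredient is in place, the remainder of the proof is a routine adaptation of the machinery developed in this paper and in~\cite{DaviesCoefficients}.
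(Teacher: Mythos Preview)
Your approach is correct and matches the paper's: split into $k\le\eps n$ via Theorem~\ref{thmGirth} (using that $W$ is strictly $(3,6)$-optimal) and $k>\eps n$ via a stability-type coefficient bound. The large-$k$ result you propose to establish is in fact already \cite[Theorem~7]{DaviesCoefficients}, so the ``main obstacle'' you flag---upgrading Perarnau--Perkins to a quantitative stability statement---has already been carried out there, and the paper simply cites it rather than redoing the work.
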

\begin{proof}
Let $\eps = \eps(3,6)$  be as in Theorem~\ref{thmGirth}. Suppose that $G$ is not isomorphic to $W_n$.  In~\cite[Theorem~7]{DaviesCoefficients}, it is shown that there is $N(\eps)$ large enough so that if $n \ge N$ and $k >\eps n$, then $i_k(G) < i_k(W_n)$.  Since $W$ is a $(3,6)$-Moore graph it is strictly $(3,6)$-optimal and so Theorem~\ref{thmGirth} shows that $i_k(G) \le i_k(W_n)$ for $k\le \eps n$ with strict inequality for $k\ge 6$.
\end{proof}


Finally, to prove the first statement of Theorem~\ref{thmCliquemin}, we briefly give an argument using the cluster expansion for the grand canonical ensemble partition function $Z^m_G(\lam)$.

\begin{proof}[Proof of the first statement in Theorem~\ref{thmCliquemin}]
Without loss of generality we may assume that $G$ has no $K_{d+1}$ component, as $\log Z^m_G(\lam)$ is additive over components.

Let $L(G)$ be the line graph of $G$, which is a $D$-regular graph for $D=2(d-1)$.
The observation that $Z^m_G(\lam) = Z_{L(G)}(\lam)$ is equivalent to the statement that $Z^m_G(\lam)$ is the partition function of a polymer model where the set of polymers $\cP$ is $E(G)$, $w(e)=\lam$ for $e\in E(G)$, and two edges are incompatible if and only if they are incident (or identical).

A standard application of Theorem~\ref{thmKP} shows that $|\lam| \le 1/(e(D+1))$ is sufficient for convergence of the cluster expansion, and with a little extra room, say $|\lam| \le 1/(Ke(D+1))$ for some $K\ge 1$, we have the tail bound 
\begin{equation}\label{eqMonomerTailBound}
  \sum_{\substack{\Gamma\in\cC\\ |\Gamma| \ge t}}|w(\Gamma)| \le nK^{-t}\,,
\end{equation}
obtained by summing~\eqref{eqKPtail} over a collection of edges that covers every vertex. 

To calculate the first few terms of the cluster expansion, note that clusters of size 1 are single edges, clusters of size 2 are single edges with multiplicity 2 or pairs of edges sharing a vertex, and clusters of size 3 consist of single edges with multiplicity 3, pairs of edges sharing a vertex one of which has multiplicity 2, claws, triangles, and paths of 3 edges.
The weight of any given cluster $\Gamma$ is simply $\lam^{|\Gamma |}$ which does not depend on the graph, but the number of each type of cluster may vary amongst $d$-regular graph.
In fact, for all of these clusters except triangles and paths of 3 edges, the number is constant over $d$-regular graphs.
Recalling that clusters are ordered tuples of polymers (whose incompatibility graph is connected), we have that the number of triangle clusters in $G$ is $\inj(C_3,G)$, and the number of 3-edge path clusters is $3d(d-1)^2n - 3\inj(C_3,G)$.
For triangle clusters $H(\Gamma)$ is a triangle and $\phi(H(\Gamma))=1/3$. 
For 3-edge path clusters $H(\Gamma)$ is a 2-edge path and $\phi(H(\Gamma))=1/6$. 
Then using the tail bound~\eqref{eqMonomerTailBound} we have
\[ \frac{1}{n} \log Z^m_G(\lam) - \frac{1}{d+1} \log Z^m_{K_{d+1}}(\lam) \ge \frac{1}{6}\lam^3 \left [ \frac{\inj(C_3,K_{d+1})}{d+1} -  \frac{\inj(C_3, G)}{n} \right]  - 2K^{-4}\,. \]

The clique $K_{d+1}$ has $\inj(C_3,K_{d+1})/(d+1) = d(d-1)$, while any $n$-vertex $d$-regular graph $G$ without a $K_{d+1}$ component has $\inj(C_3,G)/n \le d(d-1) -2$ as every vertex of $G$ must be contained in at least one triple of vertices that does not induce a triangle. 
Then 
\[ \frac{1}{n} \log Z^m_G(\lam) - \frac{1}{d+1} \log Z^m_{K_{d+1}}(\lam) \ge \frac{1}{3}\lam^3 - 2K^{-4}\,, \]
which is strictly positive for any $\lam$ satisfying 
\[ 0 < \lam < \frac{1}{6(2d-1)^4e^4} \]
by choosing  $K= ( e\lam(2d-1))^{-1}$. That gives the required statement with  $c=1/(96e^4)$.
\end{proof}

\section*{Acknowledgements}
We thank P{\'e}ter Csikv{\'a}ri for sharing a draft of~\cite{csikvari20} with us. We also thank Julian Sahasrabudhe for carefully reading the first version of this paper. WP was supported in part by NSF grants DMS-1847451 and CCF-1934915.

\providecommand{\bysame}{\leavevmode\hbox to3em{\hrulefill}\thinspace}
\providecommand{\MR}{\relax\ifhmode\unskip\space\fi MR }
\providecommand{\MRhref}[2]{%
  \href{http://www.ams.org/mathscinet-getitem?mr=#1}{#2}
}
\providecommand{\href}[2]{#2}

\end{document}